\title[Galois actions for semifield extensions]{Galois actions for semifield extensions and Galois coverings on tropical curves}
\author{Song JuAe}
\address{Tokyo Metropolitan University, 1-1 Minami-Ohsawa, Hachioji, Tokyo, 192-0397, Japan.}
\email{song-juae@ed.tmu.ac.jp}
\subjclass[2020]{Primary 08A05, 08A35, 15A80; Secondary 14T20}
\keywords{Galois actions for semifield extensions, Galois coverings on tropical curves}
\newtheorem{dfn}{Definition}[section]
\newtheorem{thm}[dfn]{Theorem}
\newtheorem{prop}[dfn]{Proposition}
\newtheorem{cor}[dfn]{Corollary}
\newtheorem{rem}[dfn]{Remark}
\newtheorem{ex}[dfn]{Example}
\def\Gamma{\varGamma}
\begin{document}

\maketitle

\begin{abstract}
For a semifield extension $T /S$, an action of a finite group $G$ on $T$ is Galois if $(1)$ the $G$-invariant subsemifield of $T$ is $S$ and $(2)$ subgroups of $G$ whose invariant semifields coincide are equal.
We show that for a finite harmonic morphism between tropical curves $\varphi : \Gamma \to \Gamma^{\prime}$ and an isometric action of a finite group $G$ on $\Gamma$, $\varphi$ is $G$-Galois \cite{JuAe3} if and only if the natural action of $G$ on the rational function semifield $\operatorname{Rat}(\Gamma)$ of $\Gamma$ induced by the action of $G$ on $\Gamma$ is Galois for the semifield extension $\operatorname{Rat}(\Gamma) / \varphi^{\ast}(\operatorname{Rat}(\Gamma^{\prime}))$, where $\varphi^{\ast}(\operatorname{Rat}(\Gamma^{\prime}))$ stands for the pull-back of $\operatorname{Rat}(\Gamma^{\prime})$ by $\varphi$.
\end{abstract}

\section{Introduction}

We call an injective semiring homomorphism between semifields $S \hookrightarrow T$ a \textit{semifield extension}, and write it as $T / S$.
For a semifield extension $T / S$, an action of a finite group $G$ on $T$ is Galois if $(1)$ the $G$-invariant subsemifield $T^G$ of $T$ is $S$ and $(2)$ subgroups of $G$ whose invariant semifields coincide are equal.
For an intermediate semifield $M$ of $T / S$, we write as $G_M$ the subgroup of $G$ such that the restriction of its every element on $M$ is the identity map of $M$.
For semifield extensions, an analogue of Galois correspondence holds:

\begin{thm}[Galois correspondence for semifield extensions]
	\label{thm:Galois}
Let $T / S$ be a semifield extension.
Fix an action of a finite group $G$ on $T$.
Let $A$ be the set of all intermediate semifields $M$ of $T / S$ such that $M = T^{G_M}$.
Let $B$ be the set of all subgroups of $G$.
Then, if the action of $G$ on $T$ is Galois for $T / S$, then the maps $\Phi : A \to B; M \mapsto G_M$ and $\Psi : B \to A; H \mapsto T^H$ satisfy $\Psi \circ \Phi = \operatorname{id}_A$, $\Phi \circ \Psi = \operatorname{id}_B$ and reverse the inclusion relations, where $\operatorname{id}_A$ (resp. $\operatorname{id}_B$) denotes the identity map of $A$ (resp. $B$).
Moreover, for any $M \in A$, the natural action of $G_M$ on $T$ is Galois for $T / M$.
\end{thm}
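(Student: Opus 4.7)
The plan is to verify each assertion directly from the two defining properties of a Galois action of $G$ on $T$, leaning on two elementary observations that I would record at the outset: $M \subseteq T^{G_M}$ for any intermediate semifield $M$, and $H \subseteq G_{T^H}$ for any subgroup $H \leq G$, both immediate from the definitions of $G_M$ and of the fixed semifield $T^H$.

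First I would dispose of $\Psi \circ \Phi = \operatorname{id}_A$, which is tautological: for $M \in A$, $\Psi(\Phi(M)) = T^{G_M} = M$ by the defining property of $A$. For $\Phi \circ \Psi = \operatorname{id}_B$, I would fix a subgroup $H \leq G$; from $H \subseteq G_{T^H}$ one obtains $T^{G_{T^H}} \subseteq T^H$, while the reverse inclusion $T^H \subseteq T^{G_{T^H}}$ holds by the very definition of $G_{T^H}$. Hence $T^{G_{T^H}} = T^H$, and condition $(2)$ of the Galois hypothesis forces $G_{T^H} = H$. That $\Phi$ and $\Psi$ reverse inclusions is then a direct check: enlarging $M$ can only shrink $G_M$, and enlarging $H$ can only shrink $T^H$.

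For the final claim, I fix $M \in A$ and examine the restricted action of $G_M$ on $T$. The axiom $T^{G_M} = M$ is exactly the assumption $M \in A$. The second Galois-action axiom is inherited from $G$ itself: any two subgroups $H_1, H_2 \leq G_M$ are in particular subgroups of $G$, and the invariant semifield of a subgroup does not depend on the ambient group, so $T^{H_1} = T^{H_2}$ forces $H_1 = H_2$ by condition $(2)$ applied to the original $G$-action.

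I do not anticipate a serious obstacle; the whole proof rests on condition $(2)$ of the Galois hypothesis, applied once nontrivially to obtain $\Phi \circ \Psi = \operatorname{id}_B$ and once essentially trivially to transfer the axiom from $G$ to $G_M$. The only step requiring a moment's care is establishing the equality $T^{G_{T^H}} = T^H$ from the two containments before invoking condition $(2)$.
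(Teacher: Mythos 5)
Your proof is correct and follows essentially the same route as the paper's: establish $T^{G_{T^H}} = T^H$ from the two trivial containments, invoke condition $(2)$ once to get $\Phi \circ \Psi = \operatorname{id}_B$, and inherit condition $(2)$ for $G_M$ from the fact that subgroups of $G_M$ are subgroups of $G$. The paper merely compresses the bijectivity and inclusion-reversal checks into a preliminary proposition proved as ``straightforward,'' so your write-up is simply a more explicit version of the same argument.
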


One of the biggest differences from field extensions is that for the natural action of the automorphism group $\operatorname{Aut}(T / S)$ of a semifield extension $T / S$ on $T$, even the invariant subsemifield of $T$ by $\operatorname{Aut}(T / S)$ is $S$, Theorem \ref{thm:Galois} may not hold.
Here, $\operatorname{Aut}(T / S)$ is the group of all automorphisms of $T$ whose restrictions on $S$ are the identity map of $S$.
It occurs due to that the automorphism group of a semifield extension is more complicated than that of a field extension (see the following example).
Hence we cannot drop the second condition of the definition of Galois actions for semifield extensions.

\begin{ex}
\upshape{
The $\boldsymbol{T}$-algebra automorphism group $\operatorname{Aut}_{\boldsymbol{T}}(\operatorname{Rat}(\Gamma))$ of the rational function semifield $\operatorname{Rat}(\Gamma)$ of a tropical curve $\Gamma$ is isomorphic to the automorphism group $\operatorname{Aut}(\Gamma)$ of $\Gamma$ by \cite[Corollary 1.3]{JuAe4}.
Here, $\boldsymbol{T}$ is the tropical semifield $(\boldsymbol{R} \cup \{ -\infty \}, \operatorname{max}, +)$ and a tropical curve is a metric graph that may have edges of length $\infty$.
$\operatorname{Aut}(\Gamma)$ coincides with the isometry group of $\Gamma$ (except points at infinity).
Hence, Artin's theorem, which states that for a finite group $G$ of automorphisms of a field $L$ and the invariant subfield $K$ of $L$ by $G$, the extension $L / K$ is a finite Galois extension with Galois group $G$, clearly does not hold for semifield extensions.
}
\end{ex}

The following theorem gives a relation between Galois coverings on tropical curves and Galois actions for semifield extensions:

\begin{thm}
	\label{thm:Galoisextension}
Let $\varphi : \Gamma \to \Gamma^{\prime}$ be a finite harmonic morphism between tropical curves and $G$ a finite group isometrically acting on $\Gamma$.
Then, $\varphi$ is $G$-Galois if and only if the action of $G$ on the rational function semifield $\operatorname{Rat}(\Gamma)$ of $\Gamma$ naturally induced by the action of $G$ on $\Gamma$ is Galois for the semifield extension $\operatorname{Rat}(\Gamma) / \varphi^{\ast}(\operatorname{Rat}(\Gamma^{\prime}))$.
\end{thm}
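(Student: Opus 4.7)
The plan is to translate both directions of the equivalence into statements about the quotient tropical curves $\Gamma/H$ for subgroups $H \leq G$ and the corresponding pull-back semifields. The central technical ingredient I expect to use is the identification $\operatorname{Rat}(\Gamma)^H = \pi_H^{\ast}(\operatorname{Rat}(\Gamma/H))$, where $\pi_H : \Gamma \to \Gamma/H$ is the quotient map: an $H$-invariant rational function on $\Gamma$ is constant on $H$-orbits and, together with its slope data, descends to the quotient. I expect this fact, or tools sufficient to establish it, to be available in \cite{JuAe3} and in the automorphism correspondence recalled from \cite{JuAe4} in the example above.

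For the forward direction, suppose $\varphi$ is $G$-Galois in the sense of \cite{JuAe3}. Then $\varphi$ should factor through an isomorphism $\Gamma/G \xrightarrow{\sim} \Gamma^{\prime}$ so that $\varphi$ is identified with $\pi_G$. Applying the identification above gives
\[
\operatorname{Rat}(\Gamma)^G = \pi_G^{\ast}(\operatorname{Rat}(\Gamma/G)) = \varphi^{\ast}(\operatorname{Rat}(\Gamma^{\prime})),
\]
which is condition $(1)$ of the Galois semifield action. For condition $(2)$, if $H_1, H_2 \leq G$ satisfy $\operatorname{Rat}(\Gamma)^{H_1} = \operatorname{Rat}(\Gamma)^{H_2}$, then by the same identification the two pull-back subsemifields $\pi_{H_i}^{\ast}(\operatorname{Rat}(\Gamma/H_i))$ coincide as subsemifields of $\operatorname{Rat}(\Gamma)$; together with the isomorphism $\operatorname{Aut}_{\boldsymbol{T}}(\operatorname{Rat}(\Gamma)) \cong \operatorname{Aut}(\Gamma)$ from \cite{JuAe4}, elements of $G$ that fix $\operatorname{Rat}(\Gamma)^{H_i}$ pointwise must preserve the orbit partition defining $\Gamma/H_i$, and the $G$-Galois hypothesis should force $H_1 = H_2$.

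For the reverse direction, suppose the induced $G$-action on $\operatorname{Rat}(\Gamma)$ is Galois for $\operatorname{Rat}(\Gamma)/\varphi^{\ast}(\operatorname{Rat}(\Gamma^{\prime}))$. Condition $(1)$ together with $\operatorname{Rat}(\Gamma)^G = \pi_G^{\ast}(\operatorname{Rat}(\Gamma/G))$ yields $\pi_G^{\ast}(\operatorname{Rat}(\Gamma/G)) = \varphi^{\ast}(\operatorname{Rat}(\Gamma^{\prime}))$, and the automorphism-to-isometry correspondence from the example should promote this semifield equality to an isomorphism $\Gamma/G \cong \Gamma^{\prime}$ identifying $\varphi$ with $\pi_G$. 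Combining this with the hypothesis that $G$ acts isometrically on $\Gamma$, and using condition $(2)$ to rule out unwanted coincidences between stabilizer subgroups of different points, one should verify all the combinatorial requirements in the definition of a $G$-Galois harmonic morphism from \cite{JuAe3}.

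The main obstacle I anticipate is making the identification $\operatorname{Rat}(\Gamma)^H = \pi_H^{\ast}(\operatorname{Rat}(\Gamma/H))$ fully rigorous in the tropical setting, since the definition of rational functions on a quotient tropical curve must be handled with care when $H$ has fixed points or when $\Gamma$ has edges of infinite length, and the slope/harmonicity data at branch points must descend correctly. A secondary difficulty is matching the precise definition of $G$-Galois harmonic morphism in \cite{JuAe3} against the algebraic faithfulness condition $(2)$; I expect this to reduce to showing that, under the $G$-Galois hypothesis, stabilizers of $H$-orbits are exactly $H$, which is the point where the hypothesis and the conclusion of condition $(2)$ interact most directly.
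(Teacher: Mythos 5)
Your central technical ingredient, the identification $\operatorname{Rat}(\Gamma)^H = \pi_H^{\ast}(\operatorname{Rat}(\Gamma/H))$ for an arbitrary subgroup $H$ of $G$, is false in general, and this is not merely a matter of ``handling the quotient with care'': it is exactly the dividing line the theorem is about. An $H$-invariant function is indeed constant on $H$-orbits, but its slope data does \emph{not} descend when $H$ fixes an edge pointwise, because the quotient edge is stretched by the order of its stabilizer (Remark \ref{rem:pi1}), so an invariant function of slope $1$ on such an edge would have to descend to a function of non-integer slope. Example \ref{ex:1} of the paper is a concrete counterexample: there $\operatorname{Rat}(\Gamma)^{\langle\sigma\rangle}$ strictly contains $\pi_{\langle\sigma\rangle}^{\ast}(\operatorname{Rat}(\Gamma^{\prime}))$. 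The correct statement is Corollary \ref{cor:pull-back}: the equality $\operatorname{Rat}(\Gamma)^H = \pi_H^{\ast}(\operatorname{Rat}(\Gamma/H))$ holds \emph{if and only if} $\pi_H$ is $H$-Galois. Consequently your reverse direction is circular: you invoke $\operatorname{Rat}(\Gamma)^G = \pi_G^{\ast}(\operatorname{Rat}(\Gamma/G))$, which is equivalent to the conclusion you are trying to reach. The substantive step you are missing there is the ``only if'' half of Corollary \ref{cor:pull-back}: from $\operatorname{Rat}(\Gamma)^G = \varphi^{\ast}(\operatorname{Rat}(\Gamma^{\prime}))$ one deduces that every edge degree of $\varphi$ equals $1$ and that every fibre is a single $G$-orbit, by testing against pull-backs of chip-firing functions $\operatorname{CF}(\{\varphi(x)\},\varepsilon)$, which vanish exactly on $\varphi^{-1}(\varphi(x))$ and are $G$-invariant. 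Note also that condition $(2)$ of the Galois-action definition plays no role in this direction; condition $(1)$ alone suffices.

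In the other direction your identification can be salvaged, but only by first invoking the Galois correspondence of \cite[Theorem 1.1]{JuAe3}: since $\varphi$ is $G$-Galois, each $\pi_H$ for $H \le G$ is $H$-Galois, and only then does $\operatorname{Rat}(\Gamma)^H = \pi_H^{\ast}(\operatorname{Rat}(\Gamma/H))$ hold. With that in place, the paper concludes condition $(2)$ by a cleaner route than your appeal to $\operatorname{Aut}_{\boldsymbol{T}}(\operatorname{Rat}(\Gamma)) \cong \operatorname{Aut}(\Gamma)$: if $\operatorname{Rat}(\Gamma)^{G_1} = \operatorname{Rat}(\Gamma)^{G_2}$, then the two pull-back subsemifields coincide, hence by the proposition preceding Corollary \ref{cor:pull-back} there is a degree-one harmonic morphism $\pi_{12}$ with $\pi_{G_1} = \pi_{12} \circ \pi_{G_2}$, and the Galois correspondence of \cite{JuAe3} then forces $G_1 = G_2$. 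Your sketch of this step (``elements of $G$ that fix $\operatorname{Rat}(\Gamma)^{H_i}$ pointwise must preserve the orbit partition\ldots'') does not engage with the actual difficulty, which is to recover the subgroup from the subsemifield.
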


Here, finite harmonic morphisms are morphisms of our category of tropical curves (see Section \ref{Section2} for more details), $\varphi^{\ast}(\operatorname{Rat}(\Gamma^{\prime}))$ stands for the pull-back of $\operatorname{Rat}(\Gamma^{\prime})$ by $\varphi$, and ``$\varphi$ is $G$-Galois'' means that $(1)$ $\varphi$ is a finite harmonic morphism of degree $|G|$ (the order of $G$) and $(2)$ the action of $G$ on $\Gamma$ induces a transitive action on every fiber and $(3)$ every stabilizer subgroup of $G$ with respect to all but a finite number of points is trivial.

This paper is organized as follows.
In Section 2, we prepare basic definitions related to semirings and tropical curves which we need later.
Section 3 gives proofs of Theorems \ref{thm:Galois} and \ref{thm:Galoisextension}.
In that section, we also consider sufficient conditions such that finite group actions become Galois under some assumptions.

\section*{Acknowledgements}
The author thanks her supervisor Masanori Kobayashi, Yuki Kageyama, Daichi Miura, Yasuhito Nakajima, and Ken Sumi for helpful comments.
This work was supported by JSPS KAKENHI Grant Number 20J11910.

\section{Preliminaries}
	\label{Section2}

\subsection{Semirings and congruences}

In this paper, a \textit{semiring} is a commutative semiring with the absorbing neutral element $0$ for addition and the identity $1$ for multiplication such that $0 \not= 1$.
If every nonzero element of a semiring $S$ is multiplicatively invertible, then $S$ is called a \textit{semifield}.
A semiring $S$ is \textit{additively idempotent} if $x + x = x$ for any $x \in S$.
An additively idempotent semiring $S$ has a natural partial order, i.e., for $x, y \in S$, $x \ge y$ if and only if $x + y = x$.

A map $\phi : S_1 \to S_2$ between semirings is a \textit{semiring homomorphism} if for any $x, y \in S_1$,
\begin{align*}
\phi(x + y) = \phi(x) + \phi(y), \	\phi(x \cdot y) = \phi(x) \cdot \phi(y), \	\phi(0) = 0, \	\text{and}\	\phi(1) = 1.
\end{align*}
A semiring homomorphism $\phi : S_1 \to S_2$ is a \textit{semiring isomorphism} if $\phi$ is bijective.
A \textit{semiring automorphism} of $S$ is a semiring isomorphism $S \to S$.

Given a semiring homomorphism $\phi : S_1 \to S_2$, we call the pair $(S_2, \phi)$ (for short, $S_2$) a \textit{$S_1$-algebra}.
For a semiring $S_1$, a map $\psi : (S_2, \phi) \to (S_2^{\prime}, \phi^{\prime})$ between $S_1$-algebras is a \textit{$S_1$-algebra homomorphism} if $\psi$ is a semiring homomorphism and $\phi^{\prime} = \psi \circ \phi$.
When there is no confusion, we write $\psi : S_2 \to S_2^{\prime}$ simply.

The set $\boldsymbol{T} := \boldsymbol{R} \cup \{ -\infty \}$ with two tropical operations:
\begin{align*}
a \oplus b := \operatorname{max}\{ a, b \} \quad	\text{and} \quad a \odot b := a + b,
\end{align*}
where both $a$ and $b$ are in $\boldsymbol{T}$, becomes a semifield.
Here, for any $a \in \boldsymbol{T}$, we handle $-\infty$ as follows:
\begin{align*}
a \oplus (-\infty) = (-\infty) \oplus a = a \quad \text{and} \quad a \odot (-\infty) = (-\infty) \odot a = -\infty.
\end{align*}
$\boldsymbol{T}$ is called the \textit{tropical semifield}.

Let $S$ be a semiring.
A subset $E \subset S \times S$ is a \textit{congruence} on $S$ if it is a subsemiring of $S \times S$ that defines an equivalence relation on $S$.
The \textit{kernel} of a semiring homomorphism $\phi : S_1 \to S_2$ is the congruence $\operatorname{ker} (\phi) = \{ (x, y) \in S_1 \times S_1 \,|\, \phi(x) = \phi(y) \}$.

\subsection{Tropical curves}
In this paper, a \textit{graph} is an unweighted, undirected, finite, connected nonempty multigraph that may have loops.
For a graph $G$, the set of vertices is denoted by $V(G)$ and the set of edges by $E(G)$.
The \textit{degree} of a vertex is the number of edges incident to it.
Here, a loop is counted twice.
A \textit{leaf end} is a vertex of degree one.
A \textit{leaf edge} is an edge incident to a leaf end.

A \textit{tropical curve} is the underlying topological space of the pair $(G, l)$ of a graph $G$ and a \textit{length function} $l: E(G) \to {\boldsymbol{R}}_{>0} \cup \{\infty\}$, where $l$ can take the value $\infty$ on only leaf edges, together with an identification of each edge $e$ of $G$ with the closed interval $[0, l(e)]$.
When $l(e)=\infty$, the interval $[0, \infty]$ is the one point compactification of the interval $[0, \infty)$ and the leaf end of $e$ must be identified with $\infty$.
We regard this not just as a topological space but as almost a metric space.
The distance between $\infty$ and any other point is infinite.
If $E(G) = \{ e \}$ and $l(e)=\infty$, then we can identify either leaf ends of $e$ with $\infty$.
When a tropical curve $\Gamma$ is obtained from $(G, l)$, the pair $(G, l)$ is called a \textit{model} for $\Gamma$.
There are many possible models for $\Gamma$.
A model $(G, l)$ is \textit{loopless} if $G$ is loopless.
Let $\Gamma_{\infty}$ denote the set of all points of $\Gamma$ identified with $\infty$.
An element of $\Gamma_{\infty}$ is called a \textit{point at infinity}.
The \textit{valence} of a point $x$ of $\Gamma$ is the minimum number of the connected components of $U \setminus \{ x \}$ with all neighborhoods $U$ of $x$.
Remark that this ``valence" is defined for a point of a tropical curve and the ``valence" in the first paragraph of this subsection is defined for a vertex of a graph, and these are compatible with each other.
We frequently identify a vertex (resp. an edge) of $G$ with the corresponding point (resp. the corresponding closed subset) of $\Gamma$.
The \textit{relative interior} $e^{\circ}$ of an edge $e$ is $e \setminus \{v, w\}$ with the endpoint(s) $v, w$ of $e$.

\subsection{Rational functions and chip-firing moves}
	\label{semiring:subsection2.3}

Let $\Gamma$ be a tropical curve.
A continuous map $f : \Gamma \to \boldsymbol{R} \cup \{ \pm \infty \}$ is a \textit{rational function} on $\Gamma$ if $f$ is a piecewise affine function with integer slopes, with a finite number of pieces and that can take the value $\pm \infty$ at only points at infinity, or a constant function of $-\infty$.
$\operatorname{Rat}(\Gamma)$ denotes the set of all rational functions on $\Gamma$.
For rational functions $f, g \in \operatorname{Rat}(\Gamma)$ and a point $x \in \Gamma \setminus \Gamma_{\infty}$, we define
\begin{align*}
(f \oplus g) (x) := \operatorname{max}\{f(x), g(x)\} \quad \text{and} \quad (f \odot g) (x) := f(x) + g(x).
\end{align*}
We extend $f \oplus g$ and $f \odot g$ to points at infinity to be continuous on whole $\Gamma$.
Then both are rational functions on $\Gamma$.
Note that for any $f \in \operatorname{Rat}(\Gamma)$, we have
\begin{align*}
f \oplus (-\infty) = (-\infty) \oplus f = f
\end{align*}
and
\begin{align*}
f \odot (-\infty) = (-\infty) \odot f = -\infty.
\end{align*}
Then $\operatorname{Rat}(\Gamma)$ becomes a semifield with these two operations.
Also, $\operatorname{Rat}(\Gamma)$ becomes a $\boldsymbol{T}$-algebra with the natural inclusion $\boldsymbol{T} \hookrightarrow \operatorname{Rat}(\Gamma)$.
Let $\operatorname{Aut}_{\boldsymbol{T}}(\operatorname{Rat}(\Gamma))$) denote the set of all $\boldsymbol{T}$-algebra automorphisms of $\operatorname{Rat}(\Gamma)$.
Then $\operatorname{Aut}_{\boldsymbol{T}}(\operatorname{Rat}(\Gamma))$ has a group structure.
Note that for $f, g \in \operatorname{Rat}(\Gamma)$, $f = g$ means that $f(x) = g(x)$ for any $x \in \Gamma$.

A \textit{subgraph} of a tropical curve is a compact subset of the tropical curve with a finite number of connected components.
Let $\Gamma_1$ be a subgraph of a tropical curve $\Gamma$ which has no connected components consisting of only a point at infinity, and $l$ a positive number or infinity.
The \textit{chip-firing move} by $\Gamma_1$ and $l$ is defined as the rational function $\operatorname{CF}(\Gamma_1, l)(x) := - \operatorname{min}(l, \operatorname{dist}(x, \Gamma_1))$ with $x \in \Gamma$, where $\operatorname{dist}(x, \Gamma_1)$ stands for the distance between $x$ and $\Gamma_1$.

\subsection{Finite harmonic morphisms and Galois coverings}

Let $\varphi : \Gamma \to \Gamma^{\prime}$ be a continuous map between tropical curves.
$\varphi$ is a \textit{finite harmonic morphism} if there exist loopless models $(G, l)$ and $(G^{\prime}, l^{\prime})$ for $\Gamma$ and $\Gamma^{\prime}$, respectively, such that $(1)$ $\varphi(V(G)) \subset V(G^{\prime})$ holds, $(2)$ $\varphi(E(G)) \subset E(G^{\prime})$ holds, $(3)$ for any edge $e$ of $G$, there exists a positive integer $\operatorname{deg}_e(\varphi)$ such that for any points $x, y$ of $e$, $\operatorname{dist}(\varphi (x), \varphi (y)) = \operatorname{deg}_e(\varphi) \cdot \operatorname{dist}(x, y)$ holds, and $(4)$ for every vertex $v$ of $G$, the sum $\sum_{e \in E(G):\, e \mapsto e^{\prime},\, v \in e} \operatorname{deg}_e(\varphi)$ is independent of the choice of $e^{\prime} \in E(G^{\prime})$ incident to $\varphi(v)$.
This sum is denoted by $\operatorname{deg}_v(\varphi)$.
Then, the sum $\Sigma_{v \in V(G):\, v \mapsto v^{\prime}} \operatorname{deg}_v(\varphi)$ is independent of the choice of vertex $v^{\prime}$ of $G^{\prime}$.
It is said the \textit{degree} of $\varphi$.
If both $\Gamma$ and $\Gamma^{\prime}$ are singletons, we regard $\varphi$ as a finite harmonic morphism that can have any number as its degree.
Note that for any $x \in \Gamma$, we can choose loopless models $(G, l), (G^{\prime}, l^{\prime})$ above so that $x \in V(G)$ and $\varphi(x) \in V(G^{\prime})$.

Let $\varphi : \Gamma \to \Gamma^{\prime}$ be a finite harmonic morphism between tropical curves.
For $f$ in $\operatorname{Rat}(\Gamma)$, the push-forward of $f$ is the function $\varphi_{\ast}(f) : \Gamma^{\prime} \to \operatorname{R} \cup \{ \pm \infty \}$ defined as follows:
for $x^{\prime} \in \Gamma^{\prime} \setminus \Gamma^{\prime}_{\infty}$, 
\begin{align*}
\varphi_{\ast}(f)(x^{\prime}) := \sum_{\substack{x \in \Gamma :\, \varphi(x) = x^{\prime}}} \operatorname{deg}_x(\varphi) \cdot f(x).
\end{align*}
We continuously extend $\varphi_{\ast}(f)$ on $\Gamma^{\prime}_{\infty}$.
Then, $\varphi_{\ast}(f)$ is a rational function on $\Gamma^{\prime}$.
The pull-back $\varphi^{\ast}(f^{\prime})$ of $f^{\prime} \in \operatorname{Rat}(\Gamma^{\prime})$ is the rational function $f^{\prime} \circ \varphi$ on $\Gamma$.

\begin{rem}
	\label{rem:isom}
\upshape{
Let $\varphi : \Gamma \to \Gamma^{\prime}$ be a map between tropical curves.
Then $\varphi$ is a continuous map whose restriction on $\Gamma \setminus \Gamma_{\infty}$ is an isometry if and only if it is a finite harmonic morphism of degree one.
In this paper, we will use the word ``a finite group $G$ isometrically acts on a tropical curve $\Gamma$" as the meaning that $G$ continuously acts on $\Gamma$ and it is isometric on $\Gamma \setminus \Gamma_{\infty}$.
}
\end{rem}

\begin{rem}
	\label{rem:pi1}
\upshape{
Let $\Gamma$ be a tropical curve and $G$ a finite group isometrically acting on $\Gamma$.
Let $\Gamma / G$ be the quotient space (as topological space) and $\pi_G : \Gamma \to \Gamma / G$ be the natural surjection.
Fix a loopless model $(V, E, l)$ ($V$ is a set of vertices and $E$ is a set of edges) for $\Gamma$ compatible with the action of $G$ on $\Gamma$, i.e., for any $g \in G$, $g(V) = V$ holds.
Let $V^{\prime} := \pi_G(V)$, $E^{\prime} := \pi_G(E)$ and for any $e \in E$, $l^{\prime}(\pi_G(e)) := |G_e| \cdot l(e)$, where $G_e$ denotes the stabilizer subgroup of $G$ with respect to $e$.
Then, $(V^{\prime}, E^{\prime}, l^{\prime})$ gives $\Gamma / G$ a tropical curve structure and is a loopless model for the quotient tropical curve $\Gamma / G$.
By loopless models $(V, E, l)$ and $(V^{\prime}, E^{\prime}, l^{\prime})$ for $\Gamma$ and $\Gamma / G$, respectively, $\pi_G$ is a finite harmonic morphism of degree $|G|$.
}
\end{rem}

\begin{dfn}[{\cite[Definition 4.1]{JuAe3}}]
	\label{dfn:Galois1}
\upshape{
Let $\Gamma$ be a tropical curve and $G$ a finite group.
An isometric action of $G$ on $\Gamma$ is \textit{Galois} if there exists a finite subset $U^{\prime}$ of $\Gamma / G$ such that for any $x^{\prime} \in (\Gamma / G) \setminus U^{\prime}$, $|\pi^{-1}_G(x^{\prime})| = |G|$ holds.
}
\end{dfn}

\begin{dfn}[{\cite[Definition 4.2]{JuAe3}}]
	\label{dfn:Galois2}
\upshape{
Let $\varphi : \Gamma \to \Gamma^{\prime}$ be a map between tropical curves.
$\varphi$ is \textit{Galois} if there exists a Galois action of a finite group $G$ on $\Gamma$ such that there exists a finite haramonic morphism of degree one $\theta : \Gamma / G \to \Gamma^{\prime}$ satisfying $\varphi \circ g = \theta \circ \pi_G$ for any $g \in G$.
Then, we say that $\varphi$ is a \textit{$G$-Galois covering} on $\Gamma^{\prime}$ or just \textit{$G$-Galois}.
}
\end{dfn}

\section{Main results}

Throughout this paper, we assume that an action of a finite group $G$ on a semifield $T$ induces a group homomorphism from $G$ to the automorphism group of $T$.

\begin{dfn}[Semifield extensions]
\upshape{
Let $T, S$ be semifields.
We call an injective semiring homomorphism $S \hookrightarrow T$ a \textit{semifield extension}, and write it as $T / S$.
We frequently identify a semifield extension $T / S$ with the inclusion $S \subset T$ via the injection $S \hookrightarrow T$.

Let $T / S$ be a semifield extension.
Let $M$ be a semifield.
A pair of injective semiring homomorphisms $S \hookrightarrow M$, $M \hookrightarrow T$ compatible with $T / S$ is called an \textit{intermediate semifield} of $T / S$.
We frequently identify the intermediate semifield with the inclusion $S \subset M \subset T$ via the injections above. 
We also call $M$ an \textit{intermediate semifield} of $T / S$.

We call an automorphism of $T$ whose restriction on $S$ is the identity map of $S$ an \textit{automorphism} of $T / S$.
Let $\operatorname{Aut}(T / S)$ denote the set of all automorphisms of $T / S$.
Then, $\operatorname{Aut}(T / S)$ becomes a group.
We call it the \textit{automorphism group} of $T / S$.

Let $G$ be a finite group.
For an action of $G$ on $T$, we call the subset of $T$ whose each element is fixed by all elements of $G$ the \textit{$G$-invariant semifield}, and write it as $T^G$.
Then, $T^G$ naturally becomes an intermediate semifield of $T / S$.
The action of $G$ on $T$ is \textit{Galois} for $T / S$ if $(1)$ $T^G = S$ and $(2)$ subgroups of $G$ whose invariant semifields coincide are equal.
For an intermediate semifield $M$ of $T / S$, we write as $G_M$ the subset of $G$ such that the restriction of its every element on $M$ is the identity map of $M$.
Then, $G_M$ becomes a group.
}
\end{dfn}

\begin{rem}
\upshape{
Note that there exists a non-injective semiring homomorphism between semifields.
In fact, the map $\boldsymbol{T} \to \boldsymbol{B}; t \not= -\infty \mapsto 0; -\infty \mapsto -\infty$ is a non-injective semiring homomorphism.
Here, $\boldsymbol{B}$ is the \textit{boolean algebra} $(\{ -\infty, 0 \}, \operatorname{max}, +)$, which is a subsemifield of $\boldsymbol{T}$.
}
\end{rem}

\begin{prop}
	\label{prop:Galois}
Let $T / S$ be a semifield extension.
Fix an action of a finite group $G$ on $T$.
Let $A$ be the set of all intermediate semifields $M$ of $T / S$ such that $M = T^{G_M}$.
Let $B$ be the set of $G_{T^H}$ for any subgroup $H$ of $G$.
Then, the maps $\Phi : A \to B; M \mapsto G_M$ and $\Psi : B \to A; G_{T^H} \mapsto T^{G_{T^H}}$ satisfy $\Psi \circ \Phi = \operatorname{id}_A$, $\Phi \circ \Psi = \operatorname{id}_B$ and reverse the inclusion relations. 
\end{prop}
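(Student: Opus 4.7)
The plan is to reduce everything to one elementary lemma: for every subgroup $H \le G$, $T^H = T^{G_{T^H}}$. The inclusion $H \subseteq G_{T^H}$ is immediate from the definitions, which gives $T^H \supseteq T^{G_{T^H}}$; conversely, any $t \in T^H$ is fixed by every $g \in G_{T^H}$ (such $g$ fixes all of $T^H$ by definition), so $t \in T^{G_{T^H}}$. This uses nothing about the action being Galois; it is the standard ``closure equals double closure'' half of the Galois connection between the two order-reversing operators $H \mapsto T^H$ and $M \mapsto G_M$.

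Using this lemma, I would first check that $\Phi$ and $\Psi$ are well-defined. For $\Phi$: if $M \in A$ then $M = T^{G_M}$, so $G_M = G_{T^{G_M}} \in B$ by taking $H = G_M$ in the definition of $B$. For $\Psi$: setting $M := T^{G_{T^H}}$ and applying the lemma to $H' := G_{T^H}$ yields $T^{H'} = T^{G_{T^{H'}}}$, which reads $M = T^{G_M}$, so $M \in A$. Independence of $\Psi$ from the particular $H$ representing an element of $B$ is automatic, since $T^{G_{T^H}}$ depends only on the subgroup $G_{T^H}$. The inversion identities then drop out mechanically: for $M \in A$, $\Psi(\Phi(M)) = T^{G_M} = M$ by the defining condition of $A$; and for $G_{T^H} \in B$, $\Phi(\Psi(G_{T^H})) = G_{T^{G_{T^H}}} = G_{T^H}$, the last step being the lemma applied inside $G_{T^{(-)}}$. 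Order-reversal is the immediate consequence of the two monotonicities $M_1 \subseteq M_2 \Rightarrow G_{M_2} \subseteq G_{M_1}$ and $H_1 \subseteq H_2 \Rightarrow T^{H_2} \subseteq T^{H_1}$, which are themselves immediate from the definitions.

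I do not anticipate a genuine obstacle, since this proposition is the purely formal ``Galois connection'' half of the correspondence and requires no hypothesis on the action. What the Galois assumption will later buy, in the stronger Theorem \ref{thm:Galois}, is that $A$ coincides with the set of \emph{all} intermediate semifields of $T/S$ and $B$ with the set of \emph{all} subgroups of $G$; in the present proposition the restriction to the ``closed'' intermediate semifields and to subgroups of the form $G_{T^H}$ is precisely what makes the bijection automatic.
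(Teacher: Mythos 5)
Your proof is correct: the paper's own proof is literally the single line ``It is straightforward,'' and your argument --- the lemma $T^H = T^{G_{T^H}}$ for every subgroup $H$, followed by the purely formal well-definedness, inversion, and order-reversal checks of the Galois connection --- is exactly the routine verification the author is alluding to. Your closing observation about what the Galois hypothesis adds in Theorem~\ref{thm:Galois} (namely that $A$ and $B$ become all intermediate semifields and all subgroups) also matches how the paper deploys this proposition there.
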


\begin{proof}
It is straightforward.
\end{proof}

\begin{proof}[Proof of Theorem \ref{thm:Galois}]
Let $H \in B$.
Since $T^H = T^{G_{T^H}}$, $\Psi(H) \in A$.
We shall show the last assertion.
Let $M \in A$.
Since $M \in A$, $M = T^{G_M}$ holds.
For any subgroups $H_1, H_2$ of $G_M$, if $T^{H_1} = T^{H_2}$, then $H_1 = H_2$ holds since $H_1$, $H_2$ are subgroups of $G$.
Hence, the natural action of $G_M$ on $T$ is Galois for $T / M$.
The remaining assertions are shown by Proposition \ref{prop:Galois}.
\end{proof}

\begin{rem}
\upshape{
Let $T / S$ be a semifield extension.
Let $G$ be a finite group acting on $T$.
When $T^G = S$, the following are equivalent:

$(1)$ the action of $G$ on $T$ is Galois for $T / S$,

$(2)$ for any subgroups $H_1, H_2$ of $G$, if $T^{H_1} = T^{H_2}$, then $H_1 = H_2$,

$(3)$ for any subgroups $H_1, H_2$ of $G$, if $G_{T^{H_1}} = G_{T^{H_2}}$, then $H_1 = H_2$,

$(4)$ for any subgroups $H_1, H_2$ of $G$, if $\operatorname{Aut}(T / T^{H_1}) = \operatorname{Aut}(T / T^{H_2})$, then $H_1 = H_2$, and

$(5)$ for $T / S$, the Galois correspondence holds.
}
\end{rem}

\begin{proof}
$(1) \Leftrightarrow (2) :$ clear.

$(2) \Rightarrow (3) :$ we shall show the contraposition.
Assume that there exist distinct subgroups $H_1, H_2$ of $G$ such that $G_{T^{H_1}} = G_{T^{H_2}}$.
For any $a \in T^{H_1}$ and $f \in G_{T^{H_2}}$, $f(a) = a$ holds since $G_{T^{H_1}} = G_{T^{H_2}}$.
Since $H_2$ is a subgroup of $G_{T^{H_2}}$, we have $a \in T^{H_2}$.
The converse inclusion is shown in a similar way, we have the conclusion.

$(3) \Rightarrow (4) :$ since the intersection of $G$ and $\operatorname{Aut}(T / T^{H_i})$ is $G_{T^{H_i}}$, by $(3)$, we have $H_1 = H_2$.

$(4) \Rightarrow (2) :$ for any subgroups $H_1, H_2$ of $G$, if $T^{H_1} = T^{H_2}$, then since $\operatorname{Aut}(T / T^{H_1}) = \operatorname{Aut}(T / T^{H_2})$, we have $H_1 = H_2$ by $(4)$.

$(1) \Rightarrow (5) :$ it is given by Theorem \ref{thm:Galois}.

$(5) \Rightarrow (2) :$ for any subgroups $H_1, H_2$ of $G$, assume that $T^{H_1} = T^{H_2}$ holds.
Since $G_{T^{H_1}} = G_{T^{H_2}}$, by Theorem \ref{thm:Galois}, we have $H_1 = H_2$.
\end{proof}

\begin{thm}
Let $T / S$ be a semifield extension.
Let $G$ be a finite group acting on $T$.
Let $M$ be an intermediate semifield of $T / S$ such that $M = T^{G_M}$.
Then, the following are equivalent:

$(1)$ $G_M$ is a normal subgroup of $G$,

$(2)$ for any $g \in G$, $g(M) \subset M$,

$(3)$ for any $g \in G$, $g(M) = M$,

$(4)$ there exists a subgroup $H$ of $\operatorname{Aut}(M / S)$ such that for any $g \in G$, there exists $h \in H$ that coincides with the restriction $g|_M$, and

$(5)$ there exists a subgroup $H^{\prime}$ of $\operatorname{Aut}(M / S)$ such that for any $m \in M$, the orbits $Gm$ and $H^{\prime}m$ coincide.

Moreover, if the action of $G$ on $T$ is Galois for $T / S$, then $M^{H^{\prime}} = S$ holds and $H$ can be choosen to be the natural action of $H$ on $T$ is Galois for $T / M$, and if the natural action of $H$ on $T$ is Galois for $T / M$, then $H$ is isomorphic to the quotient group $G / G_M$.
\end{thm}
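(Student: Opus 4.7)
The plan is to establish the equivalences via the cycle $(1) \Rightarrow (2) \Rightarrow (3) \Rightarrow (4) \Rightarrow (5) \Rightarrow (2)$ together with the side implication $(3) \Rightarrow (1)$, and then to deduce the moreover clauses from the Galois hypothesis together with Theorem \ref{thm:Galois}.

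The computational engine for the implications among $(1)$, $(2)$, $(3)$ is the identity $h(g(m)) = g\bigl((g^{-1} h g)(m)\bigr)$ for $g \in G$, $h \in G_M$, $m \in M$. For $(1) \Rightarrow (2)$ I use normality of $G_M$ to place $g^{-1} h g \in G_M$, so it fixes $m \in M = T^{G_M}$; hence every $h \in G_M$ fixes $g(m)$, forcing $g(m) \in T^{G_M} = M$. The step $(2) \Rightarrow (3)$ follows by applying $(2)$ to $g^{-1}$ as well, giving $M \subset g(M)$. For $(3) \Rightarrow (1)$ I read the identity backwards: $g(m) \in M$ gives $h(g(m)) = g(m)$, hence $(g^{-1} h g)(m) = m$ for every $m \in M$, so $g^{-1} h g \in G_M$.

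For $(3) \Rightarrow (4)$ I would take $H := \{ g|_M : g \in G \}$; condition $(3)$ makes each $g|_M$ a semifield automorphism of $M$, and since $G$ acts on $T$ as $S$-algebra automorphisms (fixing $S$ pointwise), each $g|_M$ lies in $\operatorname{Aut}(M/S)$. For $(4) \Rightarrow (5)$, the subgroup $H' := \{ g|_M : g \in G \} \subset H$ satisfies $H' m = G m$ by construction. For $(5) \Rightarrow (2)$, given $m \in M$ and $g \in G$, $g(m) \in G m = H' m \subset M$ since $H'$ acts on $M$.

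For the moreover clauses, assuming $G$ acts Galois on $T/S$, the equality $M^{H'} = S$ follows because $m \in M^{H'}$ iff $g(m) = m$ for every $g \in G$ (using $H' m = G m$), iff $m \in T^G = S$; the reverse inclusion is clear. The canonical choice $H := \{ g|_M : g \in G \}$ is the image of the restriction homomorphism $\rho : G \to \operatorname{Aut}(M/S)$, whose kernel is precisely $G_M$; the first isomorphism theorem then yields $H \cong G/G_M$. The statement that the associated action on $T$ is Galois for $T/M$ reduces, via the identification $G_M = \ker \rho$, to the last assertion of Theorem \ref{thm:Galois} applied to $M \in A$. The main obstacle I expect is purely bookkeeping around this identification and matching the slightly awkwardly phrased final moreover clause to the mechanism supplied by Theorem \ref{thm:Galois}; no idea beyond the single identity $h \circ g = g \circ (g^{-1} h g)$ is needed.
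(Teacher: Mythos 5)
Your treatment of the equivalence of $(1)$--$(5)$ matches the paper's proof essentially step for step (the paper closes the cycle with $(2)\Rightarrow(1)$ rather than $(3)\Rightarrow(1)$, but the computation $h\circ g = g\circ(g^{-1}hg)$ is the same), and your argument for $M^{H'}=S$ is fine. The gap is in the remaining ``moreover'' clauses. The statement's phrase ``the natural action of $H$ on $T$ is Galois for $T/M$'' is evidently a misprint --- $H$ is a subgroup of $\operatorname{Aut}(M/S)$ and has no natural action on $T$ --- and the paper's proof shows what is actually meant: the canonical $H=\{g|_M : g\in G\}$ acts on $M$ and this action is Galois for $M/S$. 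Your proposed reduction to ``the last assertion of Theorem \ref{thm:Galois} applied to $M\in A$'' does not prove this: that assertion concerns the kernel $G_M$ of the restriction map acting on $T$ with extension $T/M$, whereas what must be verified is a Galois property for the image $H\cong G/G_M$ acting on $M$ with extension $M/S$. These are different groups acting on different semifields, and no identification turns one statement into the other.

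The missing argument is the substantive part of the paper's proof: given subgroups $H_1,H_2$ of $H$ with $M^{H_1}=M^{H_2}$, set $G_i:=\phi^{-1}(H_i)$ where $\phi:G\to\operatorname{Aut}(M/S)$ is restriction; then $M^{H_i}=T^{G_i}\cap M$, and since $G_i\supset\ker\phi=G_M$ one has $T^{G_i}\subset T^{G_M}=M$, so in fact $M^{H_i}=T^{G_i}$; the Galois hypothesis on the $G$-action then forces $G_1=G_2$ and hence $H_1=H_2$. Nothing in your proposal supplies this. Relatedly, your proof of the final clause only covers the canonical choice $H=\operatorname{Im}\phi$: condition $(4)$ allows $H$ to be strictly larger than $\{g|_M : g\in G\}$, and the paper must first show that any such $H$ whose action on $M$ is Galois for $M/S$ coincides with $H'=\operatorname{Im}\phi$ (via $S=M^{H'}\supset M^{H}\supset S$ and the Galois property of $H$) before the first isomorphism theorem gives $H\cong G/G_M$. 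That reduction step is absent from your write-up.
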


\begin{proof}
$(1) \Rightarrow (2) :$ for any $g \in G$, $g^{\prime} \in G_M$, $m \in M$, as $g^{-1}g^{\prime}g \in G_M$, we have $g^{-1}g^{\prime}g(m) = m$.
Thus, $g^{\prime}(g(m)) = g(m) \in T^{G_M} = M$ holds.

$(2) \Rightarrow (3) :$ since $g$ is arbitrary element of $G$, we have also $g^{-1}(M) \subset M$.
Hence, $M = g(g^{-1}(M)) \subset g(M)$ holds.

$(3) \Rightarrow (4) :$ we can define a group homomorphism $\phi : G \to \operatorname{Aut}(M / S)$ as $g \mapsto g|_M$.
The image $\operatorname{Im}(\phi)$ is the desired group.

$(4) \Rightarrow (5) :$ it is enough to choose the subgroup $\{ g|_M \,|\, g \in G \}$ of $H$ as $H^{\prime}$.

$(5) \Rightarrow (2) :$ it is clear.

$(2) \Rightarrow (1) :$ for any $g \in G$, $g^{\prime} \in G_M$, $m \in M$, since $g(m) \in M$, we have $g^{-1}g^{\prime}g(m) = g^{-1}(g^{\prime}(g(m))) = g^{-1}(g(m)) = m$.
Therefore, we have $g^{-1}g^{\prime}g \in G_M$.

Assume that the action of $G$ on $T$ is Galois for $T / S$.
It is clear that $M^{H^{\prime}} \supset S$ holds.
For any $a \in M^{H^{\prime}}$, since $\{ a \} = H^{\prime}a = Ga$, by the orbit-stabilizer theorem (cf. \cite[Chapter 6]{Clive=Stuart}), we have $G_a = G$.
Thus, $a \in T^G = S$ holds.
This means that $M^{H^{\prime}} = S$.

Assume that the action of $G$ on $T$ is Galois for $T / S$.
Let $H = \{ g|_M \,|\, g \in G \}$.
Note that it is $\operatorname{Im}(\phi)$.
Let $H_1, H_2$ be subgroups of $H$ satisfying $M^{H_1} = M^{H_2}$.
Let $G_i := \phi^{-1}(H_i)$.
Since
\begin{align*}
M^{H_i} &= \{ m \in M \,|\, \text{for any }h_i \in H_i, h_i(m) = m \}\\
&= \{ m \in M \,|\, \text{for any }g_i \in G_i, g_i(m) = m \}\\
&= \{ t \in T \,|\, \text{for any }g_i \in G_i, g_i(t) = t \} \cap M\\
&= T^{G_i} \cap M,
\end{align*}
we have $T^{G_1} \cap M = T^{G_2} \cap M$.
Since the kernel $\operatorname{Ker}(\phi)$ of $\phi$ and $G_M$ coincide, $G_M$ is a subgroup of $G_i$.
Hence, $M = T^{G_M} \supset T^{G_i}$ hold.
Therefore, we have $T^{G_i} \cap M = T^{G_i}$, and thus, $T^{G_1} = T^{G_2}$.
Since the action of $G$ on $T$ is Galois for $T / S$, $G_1$ must be $G_2$.
Thus, $H_1 = \phi(G_1) = \phi(G_2) = H_2$ hold, and hence, the action of $H$ on $M$ is Galois for $M / S$.

Assume that the action of $H$ on $M$ is Galois for $M / S$.
By assumption, $H^{\prime} := \{ g|_M \,|\, g \in G \}$ is a subgroup of $H$.
By the same argument above, since $T^G = S = M^{H^{\prime}} \supset M^H \supset S$, we have $M^{H^{\prime}} = M^H$.
Since the action of $H$ on $M$ is Galois for $M / S$, we have $H^{\prime} = H$.
Since $H^{\prime} = \operatorname{Im}(\phi)$ and $\operatorname{Ker}(\phi) = G_M$, $H^{\prime}$ is isomorphic to $G / G_M$.
In conclusion, $H$ is isomorphic to $G / G_M$.
\end{proof}

We give a sufficient condition that for a semifield extension $T / S$, a given finite group action on $T$ is Galois:

\begin{prop}
	\label{prop:sufficient1}
Let $T / S$ be a semifield extension.
Fix an action of a finite group $G$ on $T$.
If $T^G = S$ holds and if for any subgroup $H$ of $G$, there exists an element $a \in T$ whose stabilizer subgroup $G_a$ is $H$, then the action of $G$ on $T$ is Galois for $T / S$.
\end{prop}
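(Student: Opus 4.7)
The plan is to verify the second defining condition of a Galois action, since the first condition $T^G = S$ is given by hypothesis. That is, I will show that whenever $H_1, H_2$ are subgroups of $G$ with $T^{H_1} = T^{H_2}$, one has $H_1 = H_2$. The argument will proceed by establishing each inclusion separately, using the stabilizer hypothesis symmetrically.

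First, I would fix subgroups $H_1, H_2$ of $G$ such that $T^{H_1} = T^{H_2}$, and apply the hypothesis to $H_1$ to obtain an element $a_1 \in T$ with $G_{a_1} = H_1$. Since every element of $H_1$ fixes $a_1$, we have $a_1 \in T^{H_1}$, and by assumption $a_1 \in T^{H_2}$ as well, so every element of $H_2$ fixes $a_1$. This gives $H_2 \subset G_{a_1} = H_1$. Swapping the roles of $H_1$ and $H_2$ and choosing $a_2 \in T$ with $G_{a_2} = H_2$ yields $H_1 \subset H_2$, hence $H_1 = H_2$, and the action of $G$ on $T$ is Galois for $T / S$.

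I do not expect any real obstacle here: the proof is essentially a two-line application of the hypothesis, and the key point is simply that the existence of an element whose stabilizer is exactly a prescribed subgroup $H$ forces the invariant semifield $T^H$ to ``detect'' $H$. The only thing to be careful about is that the hypothesis must be invoked once for each of the two subgroups under consideration, rather than only for one; this is a direct consequence of the fact that the hypothesis is assumed for \emph{every} subgroup of $G$.
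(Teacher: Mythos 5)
Your proof is correct and is essentially the paper's own argument: the paper phrases it as $H_1=\bigcap_{c\in T^{H_1}}G_c=\bigcap_{d\in T^{H_2}}G_d=H_2$, which encodes exactly your two inclusions (the element $a_i$ with $G_{a_i}=H_i$ lies in $T^{H_i}$ and pins the intersection down to $H_i$). No issues.
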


\begin{proof}
Let $H_1, H_2$ be subgroups of $G$.
Assume that $T^{H_1} = T^{H_2}$ holds.
There exist $a, b \in T$ whose stabilizer subgroups are $H_1, H_2$, respectively.
Hence, we have $H_1 = \bigcap_{c \in T^{H_1}} G_c = \bigcap_{d \in T^{H_2}} G_d = H_2$.
\end{proof}

\begin{prop}
Let $\varphi_i : \Gamma \to \Gamma_i^{\prime}$ be a finite harmonic morphism between tropical curves.
Then, the pull-backs $\varphi^{\ast}_1(\operatorname{Rat}(\Gamma_1^{\prime}))$ and $\varphi^{\ast}_2(\operatorname{Rat}(\Gamma_2^{\prime}))$ coincide if and only if there exists a finite harmonic morphism of degree one $\varphi_{12}$ satisfying $\varphi_1 = \varphi_{12} \circ \varphi_2$.
\end{prop}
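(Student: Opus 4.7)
The plan is to handle the two implications separately. For the easier $(\Leftarrow)$ direction, I would note that by Remark \ref{rem:isom} a finite harmonic morphism of degree one is a continuous bijection whose restriction to the complement of points at infinity is an isometry, hence its set-theoretic inverse is again a finite harmonic morphism of degree one. Given $\varphi_1 = \varphi_{12}\circ\varphi_2$, the identity and its rewritten form $\varphi_2 = \varphi_{12}^{-1}\circ\varphi_1$ yield by composition the two inclusions $\varphi_1^{\ast}(\operatorname{Rat}(\Gamma_1^{\prime}))\subset\varphi_2^{\ast}(\operatorname{Rat}(\Gamma_2^{\prime}))$ and its reverse.

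For the $(\Rightarrow)$ direction, my strategy is to build $\varphi_{12}$ by matching fibers. First I would establish that rational functions separate points on a tropical curve: for distinct non-infinite points $p, q$, the chip-firing function $\operatorname{CF}(\{p\},\operatorname{dist}(p,q))$ vanishes at $p$ and is strictly negative at $q$, and analogous constructions along leaf edges handle points at infinity. Combined with the hypothesis, this forces $\varphi_1(x)=\varphi_1(y)$ to be equivalent to $\varphi_2(x)=\varphi_2(y)$: if $\varphi_2(x)\neq\varphi_2(y)$ for some $x, y$ with $\varphi_1(x)=\varphi_1(y)$, pick a separating $g^{\prime}\in\operatorname{Rat}(\Gamma_2^{\prime})$; then $\varphi_2^{\ast}(g^{\prime})\in\varphi_1^{\ast}(\operatorname{Rat}(\Gamma_1^{\prime}))$ separates $x$ and $y$, a contradiction. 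I would then define $\varphi_{12}:\Gamma_2^{\prime}\to\Gamma_1^{\prime}$ by $\varphi_{12}(\varphi_2(x)):=\varphi_1(x)$, which is a continuous bijection (well-definedness and injectivity from the fiber match, continuity because $\varphi_2$ is a quotient map and $\varphi_{12}\circ\varphi_2=\varphi_1$ is continuous).

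It remains to check that $\varphi_{12}$ is a finite harmonic morphism of degree one; by Remark \ref{rem:isom} this is the same as showing its restriction to $\Gamma_2^{\prime}\setminus(\Gamma_2^{\prime})_{\infty}$ is an isometry. On a common loopless model for $\Gamma$ refined to be compatible with both $\varphi_1$ and $\varphi_2$, the factorization $\varphi_1=\varphi_{12}\circ\varphi_2$ forces the local stretch factor of $\varphi_{12}$ on each image edge $\varphi_2(e)$ to equal $\operatorname{deg}_e(\varphi_1)/\operatorname{deg}_e(\varphi_2)$, and single-valuedness of $\varphi_{12}$ makes this ratio depend only on the image edge in $\Gamma_2^{\prime}$. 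To upgrade this rational number to $1$, I would exploit the symmetry of the hypothesis: pulling back integer-slope chip-firing functions from $\Gamma_1^{\prime}$ through $\varphi_{12}$ forces the stretch factor to be a positive integer, while the same argument applied to $\varphi_{12}^{-1}$ (the pullback through which lands in $\operatorname{Rat}(\Gamma_2^{\prime})$ by the opposite inclusion) forces the reciprocal stretch factor to be a positive integer, so the stretch factor must equal $1$.

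The main obstacle will be precisely this final step: pinning down that the local stretch factor of $\varphi_{12}$ is exactly $1$ rather than merely a positive integer. The symmetric role of $\varphi_1$ and $\varphi_2$ in the pullback identity is essential here, and some care is required when verifying the slope condition at vertices of the refined model and near points at infinity, where one must confirm that chip-firing functions with the desired integer slopes actually exist.
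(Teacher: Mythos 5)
Your proposal is correct and follows essentially the same route as the paper: define $\varphi_{12}$ by $\varphi_2(y) \mapsto \varphi_1(y)$ and use pullbacks of chip-firing functions to force $\operatorname{deg}_e(\varphi_1) = \operatorname{deg}_e(\varphi_2)$, so that the stretch factor of $\varphi_{12}$ is one. Your "integrality in both directions" argument is just a rephrasing of the paper's observation that $\operatorname{deg}_e(\varphi_i)$ is the minimal nonzero absolute slope realized on $e$ by the (common) pullback semifield, and your explicit separation-of-points argument for well-definedness of $\varphi_{12}$ fills in a step the paper leaves implicit.
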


\begin{proof}
We show the if part.
Let $f \in \varphi_1^{\ast}(\operatorname{Rat}(\Gamma_1^{\prime}))$.
There exists $f^{\prime} \in \operatorname{Rat}(\Gamma_1^{\prime})$ such that $f = \varphi_1^{\ast}(f^{\prime})$.
Hence we have $f = \varphi_1^{\ast}(f^{\prime}) = f^{\prime} \circ \varphi_1 = f^{\prime} \circ \varphi_{12} \circ \varphi_2$.
Since $\varphi_{12}$ is a finite harmonic morphism, we have $f^{\prime} \circ \varphi_{12} \in \operatorname{Rat}(\Gamma_2^{\prime})$, and thus $f \in \varphi_2^{\ast}(\operatorname{Rat}(\Gamma_2^{\prime}))$.
Since $\varphi_{12}$ is a finite harmonic morphism of degree one, it is bijective and the inverse map $\varphi_{12}^{-1}$ is also a finite harmonic morphism of degree one.
Therefore, we have the inverse inclusion by the same argument.

We show the only if part.
Let $(V, E, l), (V^{\prime}_1, E^{\prime}_1, l^{\prime}_1)$ be loopless models for $\Gamma, \Gamma_1^{\prime}$, respectively, such that $\varphi_1(V) = V^{\prime}_1$.
Let $(\widetilde{V}, \widetilde{E}, \widetilde{l}), (V^{\prime}_2, E^{\prime}_2, l^{\prime}_2)$ be loopless models for $\Gamma, \Gamma_2^{\prime}$, respectively, such that $\varphi_2(\widetilde{V}) = V^{\prime}_2$.
For any $x \in \Gamma \setminus (V \cup \widetilde{V})$, there exist $e \in E$ and $\widetilde{e} \in \widetilde{E}$ containing $x$.
Then, we have $\operatorname{deg}_x(\varphi_1) = \operatorname{deg}_e(\varphi_1) = \operatorname{deg}_{\widetilde{e}}(\varphi_2) = \operatorname{deg}_x(\varphi_2)$ by the definition of pull-back of rational functions.
In fact, there exists a positive number $\varepsilon$ such that $\varphi_1^{\ast}(\operatorname{CF}(\{ \varphi_1(x) \}, \varepsilon))$ (resp. $\varphi_2^{\ast}(\operatorname{CF}(\{ \varphi_2(x) \}, \varepsilon))$) has slope $\operatorname{deg}_e(\varphi_1)$ (resp. $\operatorname{deg}_{\widetilde{e}}(\varphi_2)$) on $\varphi_1^{-1}(U_1^{\prime}) \cap e$ (resp. $\varphi_2^{-1}(U_2^{\prime} \cap \widetilde{e}$)), where $U_i^{\prime}$ is the $\varepsilon$-neighborhood of $\varphi_i(x)$.
Since these slopes $\operatorname{deg}_e(\varphi_1)$ and $\operatorname{deg}_{\widetilde{e}}(\varphi_2)$ are the minimum (absolute values of) slopes other than zero on $e$ and $\widetilde{e}$, respectively, and $\varphi_1^{\ast}(\operatorname{Rat}(\Gamma_1^{\prime})) = \varphi_2^{\ast}(\operatorname{Rat}(\Gamma_2^{\prime}))$, $\operatorname{deg}_e(\varphi_1)$ must be $\operatorname{deg}_{\widetilde{e}}(\varphi_2)$ (cf. \cite[Remark 3.3.24]{JuAe2}).
Also, by the definition of the push-forward of rational functions, with a sufficiently small positive number $\delta$, we have $(\varphi_1)_{\ast}(\operatorname{CF}(\{ x \}, \delta) \odot \delta) = \operatorname{CF}(\{ \varphi_1(x) \}, \operatorname{deg}_e(\varphi_1) \cdot \delta) \odot (\operatorname{deg}_e(\varphi_1) \cdot \delta)$ and $(\varphi_2)_{\ast}(\operatorname{CF}(\{ x \}, \delta) \odot \delta) = \operatorname{CF}(\{ \varphi_2(x) \}, \operatorname{deg}_{\widetilde{e}}(\varphi_2) \cdot \delta) \odot (\operatorname{deg}_{\widetilde{e}}(\varphi_2) \cdot \delta)$.
Since $\varphi_i$ is continuous, the map $\varphi_{12} : \Gamma_2^{\prime} \to \Gamma_1^{\prime}; \varphi_2(y) \to \varphi_1(y)$ is a finite haromonic morphism of degree one, where $y \in \Gamma$.
\end{proof}

\begin{cor}
	\label{cor:pull-back}
Let $\varphi : \Gamma \to \Gamma^{\prime}$ be a finite harmonic morphism between tropical curves.
Let $G$ be a finite group isometrically acting on $\Gamma$.
Then, $\operatorname{Rat}(\Gamma)^G = \varphi^{\ast}(\operatorname{Rat}(\Gamma^{\prime}))$ if and only if $\varphi$ is $G$-Galois.
\end{cor}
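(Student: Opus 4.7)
My plan is to reduce this corollary to the preceding proposition by first isolating the following auxiliary lemma: if the action of $G$ on $\Gamma$ is Galois in the sense of Definition \ref{dfn:Galois1}, then $\operatorname{Rat}(\Gamma)^G = \pi_G^{\ast}(\operatorname{Rat}(\Gamma / G))$. I will prove the inclusion $\supseteq$ directly from the identity $\pi_G \circ g = \pi_G$; for $\subseteq$ I will use the Galois hypothesis, which ensures only finitely many points of $\Gamma$ have non-trivial $G$-stabilizer, so by adding these points as vertices of a $G$-compatible loopless model I obtain a refinement in which every edge has trivial setwise stabilizer. In the construction of Remark \ref{rem:pi1}, $\pi_G$ then restricts to an isometric embedding on each edge and the quotient edge lengths agree with the source edge lengths, so a $G$-invariant rational function on $\Gamma$ descends to a piecewise affine function with integer slopes on $\Gamma / G$.

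For the forward implication, I will assume $\varphi$ is $G$-Galois and use Definition \ref{dfn:Galois2} to obtain a finite harmonic morphism $\theta : \Gamma / G \to \Gamma^{\prime}$ of degree one with $\varphi \circ g = \theta \circ \pi_G$ for every $g \in G$; taking $g$ to be the identity yields $\varphi = \theta \circ \pi_G$. By Remark \ref{rem:isom}, $\theta$ is an isometry on the complement of the points at infinity and $\theta^{-1}$ is itself a finite harmonic morphism of degree one, so $\theta^{\ast}$ is a semiring isomorphism between $\operatorname{Rat}(\Gamma^{\prime})$ and $\operatorname{Rat}(\Gamma / G)$; I will then conclude
\begin{align*}
\varphi^{\ast}(\operatorname{Rat}(\Gamma^{\prime})) = \pi_G^{\ast}\bigl(\theta^{\ast}(\operatorname{Rat}(\Gamma^{\prime}))\bigr) = \pi_G^{\ast}(\operatorname{Rat}(\Gamma / G)) = \operatorname{Rat}(\Gamma)^G,
\end{align*}
the last equality being the lemma applied under the Galois hypothesis.

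For the converse, I will assume $\operatorname{Rat}(\Gamma)^G = \varphi^{\ast}(\operatorname{Rat}(\Gamma^{\prime}))$ and first deduce that $\varphi$ is itself $G$-invariant, since every pull-back is $G$-invariant and rational functions separate points on $\Gamma^{\prime}$. The crux will be to show that the action of $G$ on $\Gamma$ is Galois. If it were not, some non-identity $g \in G$ would fix an edge $e_0$ pointwise, and for $x_0$ in the relative interior of $e_0$ and sufficiently small $\varepsilon > 0$ the symmetrized chip-firing function $F = \bigoplus_{y \in G \cdot x_0} \operatorname{CF}(\{ y \}, \varepsilon)$ would be $G$-invariant with slope $\pm 1$ along $e_0$ near $x_0$. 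A local analysis at a vertex $v \in \partial e_0$ -- using that $G$-invariance of $\varphi$ forces edges in the same $G_v$-orbit of tangent directions to share the same local degree, and that $\varphi$ must separate distinct $G$-orbits since $\varphi^{\ast}$ exhausts $\operatorname{Rat}(\Gamma)^G$ -- will show via harmonicity of $\varphi$ that $\deg_{e_0}(\varphi) \geq 2$, so every pull-back slope along $e_0$ is a multiple of $\deg_{e_0}(\varphi)$ and $F$ cannot lie in $\varphi^{\ast}(\operatorname{Rat}(\Gamma^{\prime}))$, a contradiction. Once the action is Galois, the lemma promotes the hypothesis to $\pi_G^{\ast}(\operatorname{Rat}(\Gamma / G)) = \varphi^{\ast}(\operatorname{Rat}(\Gamma^{\prime}))$; applying the preceding proposition to the pair $(\pi_G, \varphi)$ produces a finite harmonic morphism $\theta : \Gamma / G \to \Gamma^{\prime}$ of degree one with $\varphi = \theta \circ \pi_G$, and combined with $\pi_G \circ g = \pi_G$ this gives $\varphi \circ g = \theta \circ \pi_G$ for every $g \in G$, verifying Definition \ref{dfn:Galois2}.

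The hardest step will be this contradiction argument in the converse: transferring the slope obstruction on a pointwise-fixed edge $e_0$ from the transparent quotient map $\pi_G$ (where $\deg_{e_0}(\pi_G) = |G_{e_0}| \geq 2$ is immediate from Remark \ref{rem:pi1}) to the given morphism $\varphi$, which requires careful local bookkeeping of harmonic degrees at the endpoints of $e_0$, with case distinctions when $e_0$ is a leaf edge or when $\varphi$ threatens to merge the $G$-orbits of different tangent directions at a vertex of $\partial e_0$.
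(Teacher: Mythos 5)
Your forward direction is sound and, modulo packaging, is the same argument the paper delegates to \cite[Remark 3.3.24]{JuAe2}: after refining to a $G$-compatible loopless model in which every point with nontrivial stabilizer is a vertex, each edge maps isometrically under $\pi_G$, so $G$-invariant rational functions descend to $\Gamma / G$, and the isomorphism $\theta^{\ast}$ transports this to $\varphi$. The problem is the converse, and specifically the step you yourself single out as the hardest: the claim that a pointwise-fixed edge $e_0$ forces $\operatorname{deg}_{e_0}(\varphi) \ge 2$. That claim is false, and no local bookkeeping at the endpoints of $e_0$ can rescue it. Take the curve of Example \ref{ex:2}, let $G = \Sigma_3$, and let $\varphi = \pi_{A_3} : \Gamma \to \Gamma / A_3$ be the quotient by the cyclic subgroup $A_3 = \langle (e_1 e_2 e_3) \rangle$. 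Then $\operatorname{Rat}(\Gamma)^{\Sigma_3} = \operatorname{Rat}(\Gamma)^{A_3} = \varphi^{\ast}(\operatorname{Rat}(\Gamma / A_3))$ (the first equality is observed in Example \ref{ex:2}, the second holds because $\pi_{A_3}$ is $A_3$-Galois), the transposition $(e_2 e_3)$ fixes $e_1$ pointwise, and yet $\operatorname{deg}_{e_1}(\varphi) = 1$ because the $A_3$-stabilizer of $e_1$ is trivial. Your invariant function $F = \operatorname{CF}(G x_0, \varepsilon)$ is in this situation literally a pull-back, so the contradiction you are after does not exist.

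This is not a repairable defect of your write-up: the ``only if'' direction of the corollary is itself false for this pair $(G, \varphi)$, since $\varphi$ has degree $3 \neq |\Sigma_3|$ and every point of $\Gamma$ has nontrivial $\Sigma_3$-stabilizer, so $\varphi$ is not $\Sigma_3$-Galois in the sense of Definition \ref{dfn:Galois2}. The paper's own proof has the same hole, only better hidden: it shows that all edge degrees are one and that fibers of $\varphi$ are $G$-orbits, and then asserts ``Thus, $\varphi$ is $G$-Galois,'' which additionally requires the generic stabilizer to be trivial (equivalently $\operatorname{deg} \varphi = |G|$) --- exactly what fails above. To make the converse go through one needs an extra hypothesis killing the excess stabilizers, e.g.\ that the action of $G$ on $\Gamma$ is Galois in the sense of Definition \ref{dfn:Galois1}, or the second condition in the definition of a Galois action for semifield extensions (which is available where the corollary is invoked in the proof of Theorem \ref{thm:Galoisextension}, and does rule out the $\Sigma_3$ example since $\operatorname{Rat}(\Gamma)^{\Sigma_3} = \operatorname{Rat}(\Gamma)^{A_3}$ while $\Sigma_3 \neq A_3$). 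Everything else in your plan --- the descent lemma, $\varphi \circ g = \varphi$, fibers being orbits, and the factorization through $\pi_G$ via the preceding proposition --- is fine.
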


\begin{proof}
Let $(V, E, l), (V^{\prime}, E^{\prime}, l^{\prime})$ be loopless models for $\Gamma$, $\Gamma^{\prime}$, respectively, such that $\varphi(V) = V^{\prime}$.
The if part follows from \cite[Remark 3.3.24]{JuAe2} since for any $e \in E$, $\operatorname{deg}_e(\varphi) = 1$.
We shall show the only if part.
If $\operatorname{Rat}(\Gamma)^G = \varphi^{\ast}(\operatorname{Rat}(\Gamma^{\prime}))$, then for any $e \in E$, $\operatorname{deg}_e(\varphi) = 1$.
For any $x \in \Gamma \setminus \Gamma_{\infty}$ and any $\varepsilon > 0$, since $\varphi^{\ast}(\operatorname{CF}(\{ \varphi(x) \}, \varepsilon))$ takes zero at and only at each element of $\varphi^{-1}(\varphi(x))$ and is $G$-invariant, we have $Gx = \varphi^{-1}(\varphi(x))$.
Thus, $\varphi$ is $G$-Galois.
\end{proof}

\begin{proof}[Proof of Theorem \ref{thm:Galoisextension}]
The if part follows from Corollary \ref{cor:pull-back}.
We shall show the only if part.
By Corollary \ref{cor:pull-back}, $\operatorname{Rat}(\Gamma)^G = \varphi^{\ast}(\operatorname{Rat}(\Gamma^{\prime}))$.
Let $G_1, G_2$ be subgroups of $G$.
Assume that $\operatorname{Rat}(\Gamma)^{G_1} = \operatorname{Rat}(\Gamma)^{G_2}$ holds.
Let $\Gamma_i^{\prime}$ be the quotient tropical curve of $\Gamma$ by $G_i$.
By \cite[Theorem 1.1]{JuAe3}, the natural surjection $\pi_{G_i} : \Gamma \to \Gamma_i^{\prime}$ is $G_i$-Galois. Hence, there eixsts a finite harmonic morphism of degree one $\pi_{12}$ satisfying $\pi_{G_1} = \pi_{12} \circ \pi_{G_2}$.
Thus, by \cite[Theorem 1.1]{JuAe3} again, we have $G_1 = G_2$, which completes the proof.
\end{proof}

Note that by \cite[Corollary 1.3]{JuAe4}, the automorphism group of a tropical curve $\Gamma$ is isomorphic to $\operatorname{Aut}_{\boldsymbol{T}}(\operatorname{Rat}(\Gamma))$.
Here, an \textit{automorphism} of $\Gamma$ is a finite harmonic morphism of degree one $\Gamma \to \Gamma$.
Hence, the following corollary holds:

\begin{cor}
Let $\varphi : \Gamma \to \Gamma^{\prime}$ be a finite harmonic morphism between tropical curves and $G$ a finite group isometrically acting on $\operatorname{Rat}(\Gamma)$.
Then, $\varphi$ is $G$-Galois for the natural action of $G$ on $\Gamma$ if and only if the action of $G$ on $\operatorname{Rat}(\Gamma)$ is Galois for $\operatorname{Rat}(\Gamma) / \varphi^{\ast}(\operatorname{Rat}(\Gamma^{\prime}))$.
\end{cor}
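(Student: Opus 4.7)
The plan is to reduce the corollary to Theorem \ref{thm:Galoisextension} by transporting the $G$-action on $\operatorname{Rat}(\Gamma)$ to a $G$-action on $\Gamma$ through the isomorphism $\operatorname{Aut}(\Gamma) \cong \operatorname{Aut}_{\boldsymbol{T}}(\operatorname{Rat}(\Gamma))$ from \cite[Corollary 1.3]{JuAe4}, which is recalled in the paragraph just preceding the corollary.

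First, since by assumption the action of $G$ on $\operatorname{Rat}(\Gamma)$ factors through $\operatorname{Aut}_{\boldsymbol{T}}(\operatorname{Rat}(\Gamma))$, the cited isomorphism converts it into a group homomorphism $\widetilde{\rho} : G \to \operatorname{Aut}(\Gamma)$. Every element of $\operatorname{Aut}(\Gamma)$ is by definition a finite harmonic morphism of degree one $\Gamma \to \Gamma$, so by Remark \ref{rem:isom} its restriction to $\Gamma \setminus \Gamma_{\infty}$ is an isometry. Consequently $\widetilde{\rho}$ is an isometric action of $G$ on $\Gamma$ in the sense fixed in Remark \ref{rem:isom}; this is exactly the ``natural action of $G$ on $\Gamma$'' referred to in the statement.

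Second, I would check compatibility: the action of $G$ on $\operatorname{Rat}(\Gamma)$ naturally induced by $\widetilde{\rho}$ (that is, the action of $g$ sending $f \in \operatorname{Rat}(\Gamma)$ to $f \circ \widetilde{\rho}(g)^{-1}$) coincides with the original $G$-action we started from. This holds because the isomorphism in \cite[Corollary 1.3]{JuAe4} is precisely defined via pull-back of rational functions, so it already intertwines the two kinds of actions; no extra analysis of slopes, chip-firing moves, or harmonic data is needed at this step.

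With the two actions identified, the corollary follows at once from Theorem \ref{thm:Galoisextension} applied to $\varphi$ together with $\widetilde{\rho}$: $\varphi$ is $G$-Galois for $\widetilde{\rho}$ if and only if the induced action on $\operatorname{Rat}(\Gamma)$ is Galois for $\operatorname{Rat}(\Gamma) / \varphi^{\ast}(\operatorname{Rat}(\Gamma^{\prime}))$, and by the previous paragraph this induced action is the given one. The only genuinely nontrivial ingredient is therefore the isomorphism \cite[Corollary 1.3]{JuAe4}, which is imported as a black box; everything else is bookkeeping, so I do not expect any serious obstacle in executing this plan.
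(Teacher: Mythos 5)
Your proposal is correct and is essentially the paper's own argument: the paper derives this corollary immediately from the preceding note that $\operatorname{Aut}(\Gamma) \cong \operatorname{Aut}_{\boldsymbol{T}}(\operatorname{Rat}(\Gamma))$ (\cite[Corollary 1.3]{JuAe4}) together with Theorem \ref{thm:Galoisextension}, which is exactly your transport-the-action-and-apply-the-theorem plan. You merely spell out the compatibility check that the paper leaves implicit.
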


\begin{prop}
Let $\varphi : \Gamma \to \Gamma^{\prime}$ be a finite harmonic morphism between tropical curves.
Let $G$ be a finite group isometrically acting on $\Gamma$.
If $\varphi$ is $G$-Galois, then for any subgroup $H$ of $G$, there exists a rational function $f$ on $\Gamma$ whose stabilizer subgroup is $H$.
\end{prop}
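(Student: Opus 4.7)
The plan is to construct, for each subgroup $H$ of $G$, a chip-firing function whose $G$-stabilizer is precisely $H$. The key observation is that $\operatorname{CF}(\Gamma_1, \varepsilon)$ determines the ``support'' $\Gamma_1$ as its zero locus, so its stabilizer in $\operatorname{Rat}(\Gamma)$ coincides with the $G$-stabilizer of the set $\Gamma_1$ under the action on $\Gamma$. The base point will be chosen to have trivial $G$-stabilizer, which is possible by the $G$-Galois hypothesis.

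First, by Definitions \ref{dfn:Galois2} and \ref{dfn:Galois1} (equivalently by condition $(3)$ of the introduction), the set of points of $\Gamma$ with non-trivial $G$-stabilizer is finite. Since $\Gamma_{\infty}$ is also finite, I may choose $x \in \Gamma \setminus \Gamma_{\infty}$ with $G_x = \{ e \}$. The orbit $Hx$ then consists of $|H|$ distinct points in $\Gamma \setminus \Gamma_{\infty}$ and is a compact subset with finitely many connected components, none of which consists of a point at infinity. Hence $Hx$ qualifies as a subgraph in the sense of Subsection \ref{semiring:subsection2.3}, and I define $f := \operatorname{CF}(Hx, \varepsilon) \in \operatorname{Rat}(\Gamma)$ for an arbitrary $\varepsilon > 0$.

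Second, I would compute the $G$-stabilizer of $f$. Under the natural action of $G$ on $\operatorname{Rat}(\Gamma)$, an element $g$ sends $f$ to the function $y \mapsto -\operatorname{min}(\varepsilon, \operatorname{dist}(y, g(Hx)))$, since $g$ acts as an isometry on $\Gamma \setminus \Gamma_{\infty}$. Thus $g$ stabilizes $f$ if and only if $g(Hx) = Hx$ as subsets of $\Gamma$. The inclusion $H \subseteq \{ g \in G \,|\, g(Hx) = Hx \}$ is immediate. For the reverse inclusion, if $g(Hx) = Hx$, then $gx \in Hx$, so $gx = hx$ for some $h \in H$, whence $h^{-1}g \in G_x = \{ e \}$ and $g = h \in H$. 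No step presents a serious obstacle; the only delicacy is confirming that $Hx$ is a legitimate subgraph, which is automatic from the choice of $x$.
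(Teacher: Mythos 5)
Your proof is correct and follows essentially the same strategy as the paper's: both take a chip-firing move supported on a free $H$-orbit of a generic point (the paper phrases this as the pull-back $\pi_H^{\ast}(\operatorname{CF}(\{ x^{\prime\prime} \}, \varepsilon))$ from the quotient $\Gamma / H$, you as $\operatorname{CF}(Hx, \varepsilon)$ directly on $\Gamma$), and your verification that the stabilizer is exactly $H$ is in fact more explicit than the paper's one-line assertion. The only thing you omit is the degenerate case where $\Gamma$ is a singleton, in which no point with trivial stabilizer exists; the paper disposes of that case separately at the outset.
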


\begin{proof}
If $\Gamma$ (and hence $\Gamma^{\prime}$) is a singleton, then the assertion is clear.
Assume that $\Gamma$ (and thus $\Gamma^{\prime}$) is not a singleton.
Let $\Gamma^{\prime \prime}$ be the quotient tropical curve of $\Gamma$ by $H$.
Let $x^{\prime \prime} \in \Gamma^{\prime \prime}$ be a two valent point.
There exists a positive real number $\varepsilon$ such that the $\varepsilon$-neighborhood $U^{\prime \prime}$ of $x^{\prime \prime}$ consists of only two valent points.
Since $\varphi$ is $G$-Galois, the stabilizer subgroup of $G$ with respect to the pull-back $\pi_H^{\ast}(\operatorname{CF}(\{ x^{\prime \prime} \}, \varepsilon))$ is $H$.
\end{proof}

By this proposition and Proposition \ref{prop:sufficient1}, we have the following corollary:

\begin{cor}
Let $\varphi : \Gamma \to \Gamma^{\prime}$ be a finite harmonic morphism between tropical curves.
Let $G$ be a finite group isometrically acting on $\Gamma$.
Then, $\operatorname{Rat}(\Gamma)^G = \varphi^{\ast}(\operatorname{Rat}(\Gamma^{\prime}))$ and for any subgroup $H$ of $G$, there exists $g \in \operatorname{Rat}(\Gamma)$ whose stabilizer subgroup is $H$ if and only if the natural action of $G$ on $\operatorname{Rat}(\Gamma)$ is Galois for $\operatorname{Rat}(\Gamma) / \varphi^{\ast}(\operatorname{Rat}(\Gamma^{\prime}))$.
\end{cor}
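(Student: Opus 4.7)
The plan is to prove the corollary as a direct combination of the immediately preceding proposition and Proposition \ref{prop:sufficient1}, since the corollary's statement is essentially the logical merge of those two sufficient conditions together with Theorem \ref{thm:Galoisextension}. Write $T := \operatorname{Rat}(\Gamma)$ and $S := \varphi^{\ast}(\operatorname{Rat}(\Gamma^{\prime}))$, so the extension under consideration is $T / S$ with the finite group $G$ acting on $T$ via its isometric action on $\Gamma$.

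For the ``only if'' direction, I would simply invoke Proposition \ref{prop:sufficient1} with the above choice of $T$ and $S$. By hypothesis we have $T^G = S$, which gives the first assumption of Proposition \ref{prop:sufficient1}, and for every subgroup $H$ of $G$ a rational function $g \in T$ with stabilizer equal to $H$ exists by hypothesis, which is exactly the second assumption. The conclusion of Proposition \ref{prop:sufficient1} is that the action of $G$ on $T$ is Galois for $T / S$, as required.

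For the ``if'' direction, assume the natural action of $G$ on $T$ is Galois for $T / S$. By the defining condition (1) of a Galois action on a semifield extension, $T^G = S$, which is the first desired conclusion. Then Theorem \ref{thm:Galoisextension} (equivalently its ``if'' direction, or Corollary \ref{cor:pull-back}) tells us that the equality $\operatorname{Rat}(\Gamma)^G = \varphi^{\ast}(\operatorname{Rat}(\Gamma^{\prime}))$ implies $\varphi$ is $G$-Galois; applying the immediately preceding proposition, for every subgroup $H$ of $G$ there exists a rational function on $\Gamma$ (concretely a pull-back of a chip-firing move at a two-valent point of the $H$-quotient) whose stabilizer subgroup is $H$, which is the second desired conclusion.

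Since both directions reduce to citations of earlier results already available in the paper, there is no real obstacle; the only thing to be careful about is the correct packaging of the hypothesis into the shape required by Proposition \ref{prop:sufficient1} and verifying that the equality $T^G = S$ is exactly what allows one to pass to the ``$G$-Galois'' conclusion via Theorem \ref{thm:Galoisextension} or Corollary \ref{cor:pull-back}. No additional lemmas or calculations are needed.
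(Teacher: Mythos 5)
Your proposal is correct and follows exactly the route the paper intends: the paper derives this corollary by combining the immediately preceding proposition with Proposition \ref{prop:sufficient1}, and your write-up just makes explicit the two auxiliary steps (condition $(1)$ of the definition of a Galois action giving $T^G = S$, and Corollary \ref{cor:pull-back} converting that equality into ``$\varphi$ is $G$-Galois'' so the preceding proposition applies). No gaps.
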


\begin{figure}[h]
\begin{center}
\includegraphics[width=0.9\columnwidth]{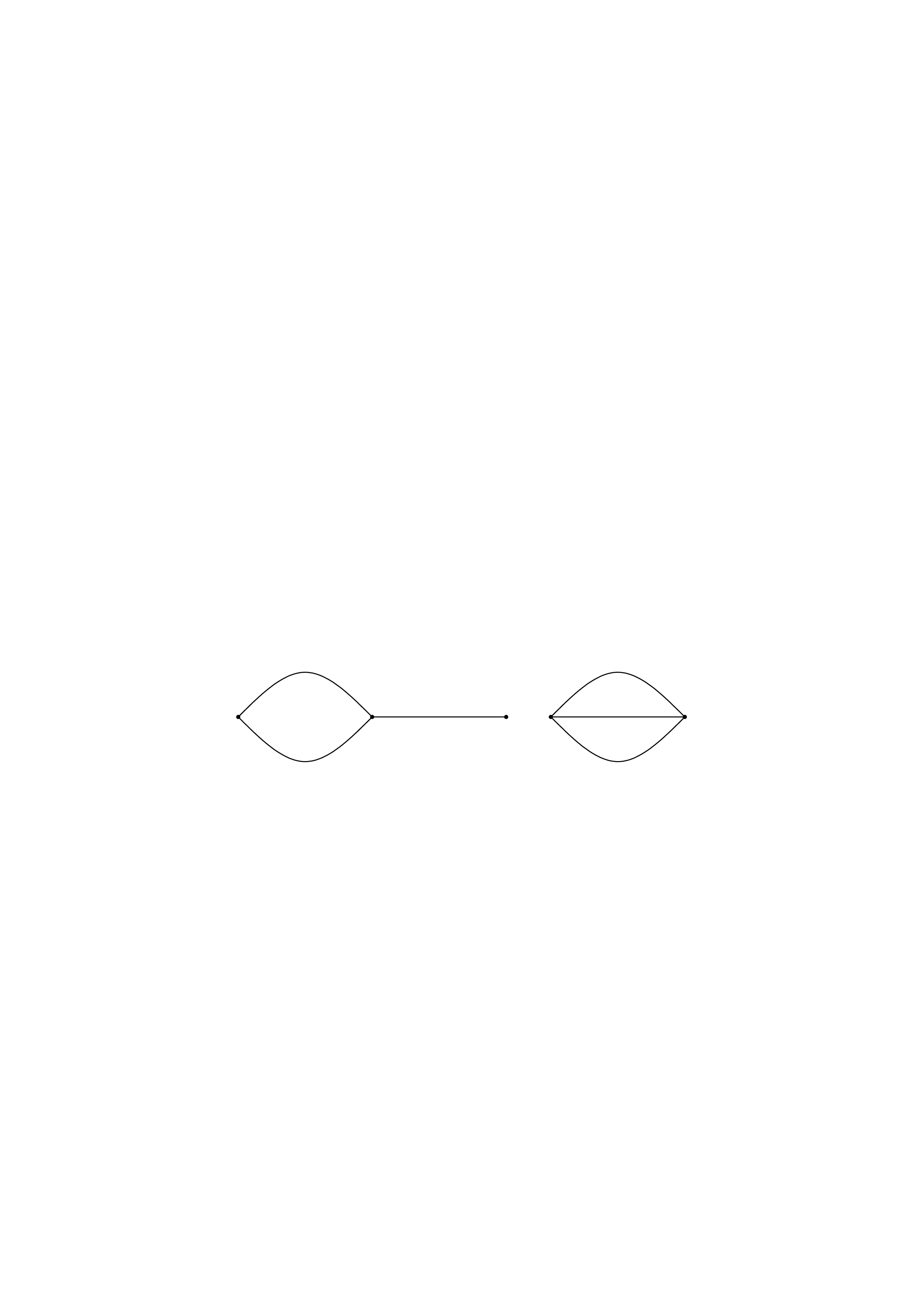}

\vspace{-1mm}
\caption{On each figure, black dots (resp. lines) stand for vertices (resp. edges).}
	\label{figure1}
\end{center}
\end{figure}

\begin{rem}
\upshape{
Let $\varphi : \Gamma \to \Gamma^{\prime}$ be a finite harmonic morphism between tropical curves.
Let $G$ be a finite group isometrically acting on $\Gamma$.
Then, even if the natural action of $G$ on $\operatorname{Rat}(\Gamma)$ is Galois for $\operatorname{Rat}(\Gamma) / \operatorname{Rat}(\Gamma)^G$, it may not be Galois for $\operatorname{Rat}(\Gamma) / \varphi^{\ast}(\operatorname{Rat}(\Gamma^{\prime}))$.
See Example \ref{ex:1}.
}
\end{rem}

\begin{ex}
	\label{ex:1}
\upshape{
Let $G$ be the graph consisting of three vertices $v_1, v_2, v_3$, two multiple edges $e_1, e_2$ between $v_1$ and $v_2$, and one edge $e_3$ between $v_2$ and $v_3$ (the left figure of Figure \ref{figure1}).
Let $\Gamma$ be the tropical curve obtained from $(G, l)$, where $l(E(G)) = \{ 1 \}$.
Let $\sigma$ be the permutation $(e_1 e_2)$.
The group $\langle \sigma \rangle$ generated by $\sigma$ naturally acts on $\Gamma$.
Let $\Gamma^{\prime}$ be the quotient tropical curve of $\Gamma$ by $\langle \sigma \rangle$.
Note that the pair of the quotient graph $G^{\prime} := G / \langle \sigma \rangle$ and the length function $l^{\prime} : E(G^{\prime}) \to \boldsymbol{R} \cup \{ \infty \}; [e_1] \mapsto 1; [e_3] \mapsto 2$ is a model for $\Gamma^{\prime}$, where $[e_i]$ denotes the equivalence class of $e_i$.
The natural surjection $\pi_{\langle \sigma \rangle} : \Gamma \to \Gamma^{\prime}$ is not $\langle \sigma \rangle$-Galois.
Also, the natural action of $\langle \sigma \rangle$ on $\operatorname{Rat}(\Gamma)$ is not Galois for $\operatorname{Rat}(\Gamma) / \pi_{\langle \sigma \rangle}^{\ast}(\operatorname{Rat}(\Gamma^{\prime}))$ but $\operatorname{Rat}(\Gamma) / \operatorname{Rat}(\Gamma)^{\langle \sigma \rangle}$.
}
\end{ex}

\begin{rem}
\upshape{
Let $\Gamma$ be a tropical curve.
Then, even an isometric action of a finite group $G$ on $\Gamma$ is faithful, the natural action of $G$ on $\operatorname{Rat}(\Gamma)$ may not be Galois for $\operatorname{Rat}(\Gamma) / \operatorname{Rat}(\Gamma)^G$.
See Example \ref{ex:2}.
}
\end{rem}

\begin{ex}
	\label{ex:2}
\upshape{
Let $G$ be the graph consisting of two vertices and three multiple edges $e_1, e_2, e_3$ between them (the right figure of Figure \ref{figure1}).
Let $l$ be the length function such that $l(E(G)) = \{ 1 \}$ and $\Gamma$ the tropical curve obtained from $(G, l)$.
The symmetric group of degree three $\Sigma_3$ isometrically and faithfully acts on $\Gamma$ in a natural way.
The invariant subsemifields by the permutation $(e_1 e_2 e_3)$ and by $\Sigma_3$ coincide.
On the other hand, clearly $\langle (e_1 e_2 e_3) \rangle \not= \Sigma_3$.
Hence the natural action of $\Sigma_3$ on $\operatorname{Rat}(\Gamma)$ is not Galois for $\operatorname{Rat}(\Gamma) / \operatorname{Rat}(\Gamma)^{\Sigma_3}$.
}
\end{ex}

\begin{prop}
	\label{prop:totally ordered}
Let $S$ be an additively idempotent semiring and $\phi : S \to S$ an automorphism of $S$.
If $S$ is totally ordered with respect to the natural partial order, then $\phi$ is the identity of $S$ or the order of $\phi$ is infinite.
\end{prop}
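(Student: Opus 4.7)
The plan is to argue by contrapositive: assume $\phi$ has finite order and show $\phi$ must be the identity. The key observation is that any semiring homomorphism, hence any automorphism, preserves the natural partial order, because the relation $x \ge y$ is defined purely in terms of addition (namely $x + y = x$); so if $\phi$ is an automorphism and $x \ge y$, applying $\phi$ to $x + y = x$ gives $\phi(x) + \phi(y) = \phi(x)$, i.e. $\phi(x) \ge \phi(y)$. Applying the same reasoning to $\phi^{-1}$ shows that $\phi$ reflects the order as well.

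Once this is in hand, totality enters. Since the order is total, the relation $>$ (strict inequality, meaning $\ge$ and $\neq$) is also preserved by $\phi$: if $x > y$ then $\phi(x) \ge \phi(y)$, and the injectivity of $\phi$ rules out equality, so $\phi(x) > \phi(y)$. Thus $\phi$ is a strict order automorphism of the totally ordered set $S$.

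Now suppose $\phi$ has finite order $n \ge 1$ but $\phi \neq \operatorname{id}_S$. Then there exists $x \in S$ with $\phi(x) \neq x$, and by totality either $\phi(x) > x$ or $\phi(x) < x$. In the first case, iterating the strict order preservation proved above yields
\begin{equation*}
\phi^n(x) > \phi^{n-1}(x) > \cdots > \phi(x) > x,
\end{equation*}
so in particular $\phi^n(x) \neq x$, contradicting $\phi^n = \operatorname{id}_S$. The case $\phi(x) < x$ is symmetric and gives $\phi^n(x) < x$, again a contradiction. Hence $\phi$ must fix every element of $S$.

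No step here looks genuinely hard; the only place where care is needed is the promotion from $\ge$ to strict $>$, which is precisely where the hypothesis that the order is total (rather than merely partial) is used together with the bijectivity of $\phi$. Without totality one could conceivably have $\phi(x)$ and $x$ incomparable, and then the chain argument above would break down.
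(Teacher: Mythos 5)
Your proof is correct and follows essentially the same route as the paper's: use the fact that $\phi$ preserves the order (since $x\ge y$ is defined by $x+y=x$), use totality to compare $x$ with $\phi(x)$, and iterate to produce a strictly monotone chain of iterates. The only difference is cosmetic --- the paper exhibits an infinite chain $s\ge\phi(s)\ge\phi^2(s)\ge\cdots$ of distinct elements to conclude the orbit is infinite, while you run the contrapositive and contradict $\phi^n=\operatorname{id}_S$ directly; your explicit justification that the chain is strict (injectivity plus antisymmetry) is a welcome detail the paper leaves implicit.
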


\begin{proof}
Assume that $\phi$ is not the identity of $S$.
Then, there exists an element $s \in S \setminus \{0, 1\}$ such that $\phi(s) \not= s$.
Since $S$ is totally ordered, $\phi(s) + s$ is $s$ or $\phi(s)$.
If $\phi(s) + s = s$, then $\phi^2(s) + \phi(s) = \phi(s)$.
By repeating the same argument, we have $s \ge \phi(s) \ge \phi^2(s) \ge \cdots$.
Since $\phi(s) \not= s$, these are distinct.
Hence, in this case, the cardinality of $\langle \phi \rangle s$ is infinite.
When $\phi(s) + s = \phi(s)$, by the same argument, the cardinality of $\langle \phi \rangle s$ is also infinite.
\end{proof}

By Proposition \ref{prop:totally ordered}, for a semifield extension $T / S$, if $T$ is an additively idempotent semiring totally ordered with respect to the natural partial order and an action of a finite group $G$ on $T$ is Galois for $T / S$, then $G$ is trivial and $T = S$.

Finally, we consider another sufficient condition that for a semifield extension $T / S$, a given finite group action on $T$ is Galois under some assumptions.
Let $U$ be an additively idempotent semifield.
Assume that $U$ is totally ordered with respect to the natural partial order.
Let $T$ be a finitely generated semifield over $U$.
Let $t_1, \ldots, t_n$ be generators of $T$.
Then, the $U$-algebra homomorphism $\psi : U(X_1, \ldots, X_n) \to T$ defined by $X_i \mapsto t_i$ is surjective, where $U(X_1, \ldots, X_n)$ denotes the semifield of all fractions of all polynomials with coefficients in $U$ and each $X_i$ is an indeterminate.
By \cite[Proposition 2.4.4]{Giansiracusa=Giansiracusa}, $\psi$ induces a semiring isomorphism $\phi : U(X_1, \ldots, X_n) / \operatorname{ker} (\psi) \to T$.
Let $V$ be the set $\{ u \in (U \setminus \{ 0 \})^n \,|\, \text{for any }(f, f^{\prime}) \in \operatorname{ker}(\psi), f(u) = f^{\prime}(u) \}$.
Assume that for any two elements $[f], [f^{\prime}] \in U(X_1, \ldots, X_n) / \operatorname{ker}(\psi)$, $[f] = [f^{\prime}]$ if and only if for any $v \in V$, $f(v) = f^{\prime}(v)$ holds.
Let $G$ be a finite group.
Assume that an action of $G$ on $V$ induces the action of $G$ on $U(X_1, \ldots, X_n) / \operatorname{ker}(\psi)$ such that for any $g \in G$, $[f] \in U(X_1, \ldots, X_n) / \operatorname{ker}(\psi)$, $v \in V$, $g([f])(v) = f(g^{-1}(v))$.
Since $U(X_1, \ldots, X_n) / \operatorname{ker}(\psi)$ is a $U$-algebra, this induced action indeces a group homomorphism from $G$ to the $U$-algebra automorphism group of $U(X_1, \ldots, X_n) / \operatorname{ker}(\psi)$.

\begin{prop}
In the above setting, if there exists an element $v$ of $V$ whose stabilizer subgroup of $G$ is trivial, then the action of $G$ on $T$ is Galois for $T / T^G$.
\end{prop}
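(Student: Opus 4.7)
The plan is to reduce to Proposition \ref{prop:sufficient1}: since the target extension is $T/T^G$, the condition $T^G = T^G$ is automatic, so it remains to produce, for every subgroup $H \le G$, an element $a_H \in T$ whose $G$-stabilizer equals $H$. All of these $a_H$ will be built from a single auxiliary element attached to the given $v$, exploiting that its trivial stabilizer makes the orbit $Gv$ a free $G$-set of $|G|$ distinct points of $V$.

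The technical core is the construction of an auxiliary $f_v \in T$ which, under the faithful evaluation of $T$ as functions on $V$, strictly peaks at $v$ among the orbit: $f_v(v) > f_v(gv)$ for every $g \in G \setminus \{e\}$, in the total order on $U$. I would manufacture $f_v$ from the generators $X_1, \dots, X_n$: for each $g \neq e$ the points $v, gv$ differ in some coordinate $i(g)$ and, after multiplicative inversion in the semifield $T$, one may assume $v_{i(g)} > (gv)_{i(g)}$; the total-order hypothesis on $U$ then lets one assemble these local separators into a single element (a suitable product of large integer powers of the $X_{i(g)}$) whose dominant term at each $gv$ is controlled. Once $f_v$ is in hand, set
\begin{align*}
a_H := \bigoplus_{h \in H} h \cdot f_v.
\end{align*}
By construction the $H$-action permutes the summands, so $H \subseteq G_{a_H}$. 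For the reverse inclusion one computes
\begin{align*}
(g \cdot a_H)(v) = a_H(g^{-1}v) = \bigoplus_{j \in Hg^{-1}} f_v(jv),
\end{align*}
which equals $f_v(v) = a_H(v)$ if and only if $e \in Hg^{-1}$, i.e.\ precisely when $g \in H$; for $g \notin H$ every term in the right-hand maximum is strictly smaller than $f_v(v)$ by the peaking property, so $g \cdot a_H \neq a_H$ by faithfulness of the evaluation. Thus $G_{a_H} = H$, and Proposition \ref{prop:sufficient1} yields the Galois conclusion.

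The main obstacle will be the construction of $f_v$: each coordinate function separates only one pair of orbit points, and combining the $|G|-1$ local separators into a single peaking element is delicate because a naive product permits undesired factors to flip the comparison at some other $gv$. The totally ordered hypothesis on $U$ must be exploited carefully — the natural route is an inductive choice of integer exponents in which the exponent attached to each separator is chosen large enough relative to those already fixed to dominate its own comparison — and this requires attention because a general totally ordered abelian group need not be Archimedean, so the dominance has to be arranged stepwise rather than by a single uniform scaling. Once the peaking element is produced, the symmetrization-and-evaluation computation above is routine.
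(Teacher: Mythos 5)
Your overall skeleton is exactly the paper's: reduce to Proposition \ref{prop:sufficient1}, build for each subgroup $H$ an element whose stabilizer is $H$ by symmetrizing over $H$ a single element that ``peaks'' at $v$, and use the trivial stabilizer of $v$ to recover $H$ from the peak locus. Your stabilizer computation for $a_H = \bigoplus_{h\in H} h\cdot f_v$ is correct as it stands (and peaking only over the orbit $Gv$ would indeed suffice). The problem is precisely the step you defer: the construction of $f_v$. The route you sketch --- a product of integer powers of the coordinates $X_{i(g)}^{\pm 1}$ --- cannot work, no matter how the exponents are chosen, because any such monomial is a multiplicative character $(U\setminus\{0\})^n \to U\setminus\{0\}$ into a totally ordered abelian group. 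Writing $d^{(g)}$ for the ``difference'' of $v$ and $gv$ in the group of units, the condition $f_v(v) > f_v(gv)$ for all $g \neq e$ asks for a homomorphism that is strictly positive on every $d^{(g)}$, and this fails whenever two orbit points sit on opposite sides of $v$: already with $n=1$, $v_1 = 1$, $(gv)_1 = c$, $(g'v)_1 = c^{-1}$ with $c > 1$, the monomial $X_1^m$ satisfies $X_1^m(v) > X_1^m(gv)$ only for $m<0$ and $X_1^m(v) > X_1^m(g'v)$ only for $m>0$. This is not a non-Archimedean subtlety; it fails over $\boldsymbol{T}$. Nor can you repair it by taking a tropical polynomial (a sum of monomials): in an additively idempotent totally ordered semifield a sum of characters is ``convex'' and cannot have a strict interior peak either.

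The missing idea is to use multiplicative inversion of a \emph{sum}, not of individual coordinates. In a totally ordered idempotent semifield one has $a + a^{-1} \ge 1$ with equality if and only if $a = 1$ (the tropical absolute value). The paper therefore sets
\begin{align*}
f := \left[ \sum_{i=1}^{n} \left\{ v_i^{-1}\cdot X_i + \left(v_i^{-1}\cdot X_i\right)^{-1} \right\} \right]^{-1},
\end{align*}
which attains its maximal value exactly at $u = v$ and is strictly smaller at every other $u \in V$ (in fact on all of $V$, not just the orbit). With this $f$ in place of your $f_v$, your symmetrization $f_H = \sum_{h\in H} h(f)$ and the ensuing stabilizer argument go through verbatim. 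So the proposal is salvageable, but as written the technical core is not merely ``delicate''; the proposed construction is impossible and must be replaced.
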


\begin{proof}
Let $v = (v_1, \ldots, v_n)$ and 
\begin{align*}
f(X_1, \ldots, X_n) := \left[ \sum_{i = 1}^n \left\{ v_i^{-1} \cdot X_i + \left( v_i^{-1} \cdot X_i \right)^{-1} \right\} \right]^{-1}.
\end{align*}
For $u = (u_1, \ldots, u_n) \in V$, since $U$ is totally ordered, if $u \not= v$, then there exists $i$ such that $v_i^{-1} \cdot u_i \not= 0$.
Hence, $v_i^{-1} \cdot u_i$ or $(v_i^{-1} \cdot u_i)^{-1}$ is bigger than $0$.
Thus, we have
\begin{align*}
f(u) 
\begin{cases}
= 0 \quad \text{if } u = v,\\
< 0 \quad \text{if } u \not= v.
\end{cases}
\end{align*}
For any subgroup $H$ of $G$, let $f_H := \sum_{h \in H}h(f)$.
By definition, $f_H$ takes zero at and only at elements of $Hv$, and values less than zero at any other elements of $V$.
Therefore, the stabilizer subgroup of $G$ with respect to $f_H$ is $H$, which completes the proof by Proposition \ref{prop:sufficient1}.
\end{proof}


\begin{thebibliography}{9}

\bibitem{Giansiracusa=Giansiracusa} Jeffrey Giansiracusa and Noah Giansiracusa, \textit{Equations of tropical varieties}, Duke Mathematical Journal \textbf{165}(18):3379-3433, 2016.

\bibitem{JuAe2} Song JuAe, \textit{Galois quotients of metric graphs and invariant linear systems}, arXiv:1901.09172.

\bibitem{JuAe3} Song JuAe, \textit{Galois correspondence for Galois coverings on tropical curves}, arXiv:2201.06603.

\bibitem{JuAe4} Song JuAe, \textit{Semiring isomorphisms between rational function semifields of tropical curves induce isomorphisms between tropical curves}, arXiv:2110.08091.

\bibitem{Clive=Stuart} Clive Reis and Stuart A Rankin, \textit{Abstract Algebra: Introduction to Groups, Rings and Fields with Applications (Second Edition)}, WSPC, 2013.
\end{thebibliography}
\end{document}